\documentclass[11pt]{article}
\usepackage{color,amsmath, amsthm, amsfonts, amssymb, dsfont, graphicx, listings, graphics, epsfig, enumerate,bbm,dsfont,mathrsfs}

\usepackage[margin=1in]{geometry}
\usepackage[fleqn,tbtags]{mathtools}
\usepackage[english]{babel}

\usepackage[round]{natbib} 

\title{Set-valued convex compositions}
\author{\c{C}a\u{g}{\i}n Ararat\thanks{Bilkent University, Department of Industrial Engineering, Ankara, Turkey, cararat@bilkent.edu.tr.}
}
\date{June 27, 2023}

\addto\captionsenglish {}
\makeatletter \renewenvironment{proof}[1][\proofname] {\par\pushQED{\qed}\normalfont\topsep6\p@\@plus6\p@\relax\trivlist\item[\hskip\labelsep\bfseries#1\@addpunct{.}]\ignorespaces}{\popQED\endtrivlist\@endpefalse} \makeatother
\newtheorem{theorem}{Theorem}[section]
\newtheorem{corollary}[theorem]{Corollary}
\newtheorem{lemma}[theorem]{Lemma}
\newtheorem{proposition}[theorem]{Proposition}
\newtheorem{assumption}[theorem]{Assumption}

\theoremstyle{definition}

\newtheorem{remark}[theorem]{Remark}

\numberwithin{equation}{section}

\newcommand{\R}{\mathbb{R}}

\DeclareMathOperator{\cl}{cl}

\DeclareMathOperator{\co}{conv}

\DeclareMathOperator{\dom}{dom}
\DeclareMathOperator{\epi}{epi}
\DeclareMathOperator{\gr}{gr}

\newcommand{\W}{\mathcal{W}}

\newcommand{\G}{\mathscr{G}}
\newcommand{\F}{\mathscr{F}}

\newcommand{\K}{\mathcal{K}}
\newcommand{\X}{\mathcal{X}}
\newcommand{\Y}{\mathcal{Y}}
\newcommand{\Z}{\mathcal{Z}}
\newcommand{\A}{\mathscr{A}}

\renewcommand{\P}{\mathscr{P}}

\newcommand{\of}[1]{\ensuremath{\left( #1 \right)}}
\newcommand{\cb}[1]{\ensuremath{ \left\{ #1 \right\} }}

\newcommand{\ip}[1]{\ensuremath{ \left\langle #1 \right\rangle }}

\def\prehp(#1,#2){\ensuremath{  #1 \cdot #2 }}

\begin{document}
\maketitle
\thispagestyle{empty}

\begin{abstract}
We study the composition of two set-valued functions defined on locally convex topological linear spaces. We assume that these functions map into certain complete lattices of sets that have been used to establish a conjugation theory for set-valued functions in the literature. Our main result is a formula for the conjugate of the composition in terms of the conjugates of the ingredient functions. As a special case, when the composition is proper and has further regularity, our formula yields a dual representation for the composition. The proof of the main result uses Lagrange duality and minimax theory in a nontrivial way.\\
\\[-5pt]
\textbf{Keywords and phrases:} complete lattice, set-valued function, convex function, composition, conjugation, Fenchel-Moreau theorem\\
\\[-5pt]
\textbf{Mathematics Subject Classification (2020): }06B23, 26E25, 46A55, 49J53, 90C48.
\end{abstract}

\section{Introduction}\label{sec:intro}

Conjugation is one of the fundamental concepts in convex analysis. Given an extended real-valued function $\rho$ on a locally convex topological linear space $\X$, the conjugate function of $\rho$ is defined as a weak*-lower semicontinuous convex function on the dual space $\X^\ast$ of $\X$. Then, by switching the roles of the primal and dual spaces, the biconjugate of $\rho$ is defined as a lower semicontinuous convex function on $\X$. The famous Fenchel-Moreau theorem states that $\rho$ coincides with its biconjugate provided that $\rho$ be proper, convex, and lower semicontinuous. In this case, the theorem provides a dual representation for $\rho$ in terms of its conjugate function as a supremum over the elements of $\X^\ast$; hence, $\rho$ is equivalently described by its conjugate function.

In view of the Fenchel-Moreau theorem, when the proper, convex, lower semicontinuous function $\rho$ is defined in terms of several functions, calculating the conjugate of $\rho$ in terms of the constituent functions becomes an important task for expressing the dual representation of $\rho$. Such structures include sums, conic combinations, compositions, infimal convolutions, and so on. Conjugation formulae for these structures are available in the literature, some standard ones can be found in \citet[Chapter 2]{zalinescu}. We also refer the reader to the recent work \citet{mucahitthesis} for duality results for extended real-valued quasiconvex compositions.

From an application point of view, a special class of convex functions, called \emph{convex risk measures}, defined on Lebesgue spaces are frequently used in financial mathematics. These are monotone, translative, and convex functions that are used to calculate capital requirements for uncertain financial positions. In this setting, Fenchel-Moreau theorem applied to a convex risk measure yields a dual representation that can be interpreted as a worst-case risk evaluation under Knightian uncertainty (or model uncertainty). We refer the reader to \citet[Sections~4.2,~4.3]{fs:sf} for the interplay between conjugation theory and convex risk measures.

The focus of this paper is on set-valued functions rather than on extended real-valued functions. In this case, we replace $\rho$ with a function $R$ defined on $\X$ mapping into the power set of another locally convex topological linear space $\Z$, that is, $R(x)\subseteq \Z$ for each $x\in\X$. In set-valued analysis, using the entire power set of $\Z$ as the image space of $R$ generally makes the study of $R$ intractable. Hence, one restricts attention to a certain class of subsets of $\Z$ such as closed sets, closed convex sets, compact sets, convex compact sets, and so on.

In the literature, several attempts have been made to generalize the concepts and results of convex analysis to the set-valued setting. In this paper, we follow the approach based on \emph{complete lattices}; see \citet{setoptsurv} for a detailed survey. More precisely, one assumes that $\Z$ is endowed with a preorder that is compatible with the topological linear space structure and extends the preorder to the power set of $\Z$ by introducing some \emph{set relations}. These set relations can be used to partition the power set into equivalence classes, each of which is represented by a unique element of a certain class of subsets of $\Z$. In particular, the class of all representatives is a complete lattice, that is, every subset of it has an infimum and supremum in the order-theoretic sense. Consequently, one can restrict attention to functions that map into this complete lattice and operate with these functions in a similar way to extended real-valued functions.

Using complete lattices induced by set relations, a conjugation theory for set-valued functions is established in \citet{ham09,ham11}; in particular, a set-valued generalization of the Fenchel-Moreau theorem is proved in \citet{ham09}. Later, in \citet{completeduality}, a duality theory for set-valued quasiconvex functions is constructed. Parallel to these developments in set-valued convex analysis, convex risk measures have been generalized to the set-valued setting in \citet{hh:duality}. These so-called \emph{set-valued convex risk measures} are defined on Lebesgue spaces of random vectors, and they have found applications in markets with transaction costs (e.g., \citet{hh:duality,svdrm}) and systemic risk measures (e.g., \citet{frw,systriskduality}).

When the set-valued function is defined in terms of several set-valued functions, calculating the conjugate of $R$ in terms of these of the constituent functions is generally more complicated compared to the extended real-valued setting. The main reason is that, in the set-valued setting, these calculations typically involve an additional operation called \emph{scalarization}, which is defined through minimizing a continuous linear function over the realization of the set-valued function. Having said this, when $R$ is the sum or infimal convolution of two set-valued functions, obtaining conjugation formulae is relatively easy and such formulae have been obtained in \citet[Section~6.1]{ham11} and \citet[Section~4.4]{ham09}, respectively.

In this paper, we study the \emph{composition} of two set-valued functions $F, G$ mapping into a complete lattice. We first consider the basic properties of the composition such as convexity, closedness, and properness in terms of the analogous properties of $F, G$. Then, we tackle the more challenging problem of calculating the conjugate function of the composition in terms of the conjugate functions of $F, G$. The proof of the main result (Theorem~\ref{thm:conj}) relies on several technical observations together with the use of Liu's minimax inequality (see \citet{liu,liurelated1,liurelated2}), which works under weaker conditions than Sion's minimax equality (see \citet{sion}). We also use Lagrange duality to obtain the final version of the conjugation formula with a particular attention paid to the properness of the scalarizations of $F, G$. As a corollary of the main theorem, we provide a dual representation for a convex composition provided that it be proper and satisfy a semicontinuity condition.

The rest of the paper is organized as follows. In Section~\ref{sec:prelim}, we recall some basic concepts in convex and set-valued analysis. Section~\ref{sec:comp} is devoted to set-valued convex compositions, the subject matter of the paper. The proof of Theorem~\ref{thm:conj}, the main result, is presented separately in Section~\ref{sec:proof} with a technical preparation before the actual proof. We finish the paper with some concluding remarks in Section~\ref{sec:conc}.

\section{Preliminaries}\label{sec:prelim}

In this section, we review some preliminary notions and results in convex analysis for extended real-valued and set-valued functions. The book \citet{zalinescu} is a standard reference for classical convex analysis in infinite-dimensions. For the set-valued case, we refer the reader to the pioneering work \citet{ham09} and the survey article \citet{setoptsurv}.

\subsection{Extended real-valued functions}\label{sec:extended}

Let $\X$ be a Hausdorff locally convex topological real linear space with topological dual $\X^\ast$. We denote by $\ip{\cdot,\cdot}\colon\X^\ast\times\X\to\R$ the bilinear duality mapping between $\X^\ast$ and $\X$. Let us fix a neighborhood base $\mathcal{N}(\X)$ of $0\in\X$.

Let $\rho\colon\X\to[-\infty,+\infty]$ be a function. The \emph{effective domain} and \emph{epigraph} of $\rho$ are defined as
\[
\dom(\rho) \coloneqq \cb{x\in\X\mid \rho(x)<+\infty},\quad \epi(\rho)\coloneqq\{(x,z)\in\X\times \R\mid \rho(x)\leq r\},
\]
respectively. For each $r\in\R$, the corresponding \emph{lower-level set} of $\rho$ is defined as
\[
\{\rho\leq r\}\coloneqq\cb{x\in\X\mid \rho(x)\leq r}.
\]
We say that $\rho$ is \emph{proper} if $\dom(\rho)\neq \emptyset$ and $\rho(x)>-\infty$ for every $x\in\X$, \emph{convex} if $\epi(\rho)$ is convex, \emph{quasiconvex} if $\{\rho\leq r\}$ is convex for each $r\in\R$, and \emph{closed} if $\epi(\rho)$ is closed in the product topology on $\X\times\R$. Note that $\rho$ is convex if and only if $\rho(\lambda x^1+(1-\lambda)x^2)\leq \lambda \rho(x^1)+(1-\lambda) \rho(x^2)$ for every $x^1,x^2\in\dom(\rho)$ and $\lambda\in(0,1)$; $\rho$ is quasiconvex if and only if $\rho(\lambda x^1+(1-\lambda)x^2)\leq\max\{\rho(x^1),\rho(x^2)\}$ for every $x^1,x^2\in\dom(\rho)$ and $\lambda\in(0,1)$; $\rho$ is closed if and only if it is \emph{lower semicontinuous at each $x\in \X$}, that is,
\[
\rho(x)\leq \liminf_{x^\prime \rightarrow x}\rho(x^\prime):=\sup_{U\in\mathcal{N}(\X)}\inf_{x^\prime\in x+U}\rho(x^\prime)
\]
for every $x\in\X$. In the latter case, we indeed have $\rho(x)=\liminf_{x^\prime\rightarrow x}\rho(x^\prime)$ for every $x\in\X$. We also say that $\rho$ is \emph{concave} if $-\rho$ is convex, \emph{quasiconcave} if $-\rho$ is quasiconvex, and $\rho$ is \emph{upper semicontinuous at each $x\in\X$} if $-\rho$ is lower semicontinuous at each $x\in\X$.

The function $\rho^\ast\colon\X^\ast \to [-\infty,+\infty]$ defined by
\[
\rho^\ast(x^\ast)\coloneqq \sup_{x\in\X}\of{\ip{x^\ast,x}-\rho(x)},\quad x^\ast\in\X^\ast,
\]
is called the \emph{conjugate function} or \emph{Legendre-Fenchel transform} of $\rho$. Then, the \emph{biconjugate function} $\rho^{\ast\ast}\colon \X\to[-\infty,+\infty]$ of $\rho$ is defined by
\[
\rho^{\ast\ast}(x)\coloneqq \sup_{x^\ast\in\X^\ast}\of{\ip{x^\ast,x}-\rho^\ast(x^\ast)},\quad x\in\X.
\]
It is easy to see that if $\rho^{\ast\ast}$ is proper, then so is $\rho^\ast$; if $\rho^\ast$ is proper, then so is $\rho$.

We recall the celebrated Fenchel-Moreau biconjugation theorem next.

\begin{theorem}\label{thm:FM}
	\citep[Theorems~2.3.3,~2.3.4]{zalinescu} Let $\rho\colon \X\to[-\infty,+\infty]$ be a function. The following are equivalent:
	\begin{enumerate}[(i)]
		\item $\rho$ is a proper closed convex function, or $\rho\equiv +\infty$, or $\rho\equiv -\infty$.
		\item $\rho=\rho^{\ast\ast}$, that is, $\rho(x)=\sup_{x^\ast\in \X^\ast}\of{\ip{x^\ast,x}-\rho^\ast(x^\ast)}$ for each $x\in\X$.
	\end{enumerate}
\end{theorem}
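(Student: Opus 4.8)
The plan is to prove the two implications separately, relying throughout on the Young--Fenchel inequality $\ip{x^\ast,x}\leq \rho(x)+\rho^\ast(x^\ast)$, which is immediate from the definition of $\rho^\ast$. Rearranged as $\ip{x^\ast,x}-\rho^\ast(x^\ast)\leq\rho(x)$ and supremized over $x^\ast\in\X^\ast$, it yields the pointwise bound $\rho^{\ast\ast}\leq\rho$ at the outset. This reduces the substantive content of the equivalence to establishing the reverse inequality $\rho\leq\rho^{\ast\ast}$ under the hypotheses in (i).

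For the implication (ii)$\Rightarrow$(i), I would observe that $\rho^{\ast\ast}$ is, by its very definition, a pointwise supremum of the continuous affine functions $x\mapsto\ip{x^\ast,x}-\rho^\ast(x^\ast)$ on $\X$; any such supremum is convex and closed. Hence $\rho=\rho^{\ast\ast}$ is automatically convex and closed, and it remains only to sort out the range of values: if $\rho$ is nowhere $-\infty$ and somewhere finite, it is proper, while the two improper possibilities collapse to the constants $+\infty$ and $-\infty$. These last two I would dispatch by the direct computations $\rho\equiv+\infty\Rightarrow\rho^\ast\equiv-\infty\Rightarrow\rho^{\ast\ast}\equiv+\infty$ and $\rho\equiv-\infty\Rightarrow\rho^\ast\equiv+\infty\Rightarrow\rho^{\ast\ast}\equiv-\infty$.

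The bulk of the work is in (i)$\Rightarrow$(ii). The two constant cases are handled by the same direct computations as above, so assume $\rho$ is proper, closed, and convex; then $\epi(\rho)$ is a nonempty closed convex subset of $\X\times\R$. Fixing $x_0\in\X$ and a real $r<\rho(x_0)$, the point $(x_0,r)$ lies outside $\epi(\rho)$, and I would invoke the Hahn--Banach separation theorem in the locally convex space $\X\times\R$ to produce a continuous linear functional $(x^\ast,s)\in\X^\ast\times\R$ strictly separating $(x_0,r)$ from $\epi(\rho)$. Letting the epigraph coordinate tend to $+\infty$ forces $s\geq 0$. When $s>0$, dividing the separating inequality by $s$ and evaluating along the graph of $\rho$ exhibits $-x^\ast/s$ as a point of $\dom(\rho^\ast)$ and gives $\rho^{\ast\ast}(x_0)>r$; letting $r\uparrow\rho(x_0)$ then yields $\rho^{\ast\ast}(x_0)\geq\rho(x_0)$, as desired.

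The main obstacle is the degenerate \emph{vertical} case $s=0$, which can occur exactly when $x_0\notin\dom(\rho)$ and where the separating functional carries no information about the value $\rho(x_0)=+\infty$. To resolve it I would first establish, as a preliminary lemma, that a proper closed convex function admits at least one continuous affine minorant, i.e. $\dom(\rho^\ast)\neq\emptyset$: choosing $x_1\in\dom(\rho)$ and separating $(x_1,r_1)$ with $r_1<\rho(x_1)$ from $\epi(\rho)$, the membership $x_1\in\dom(\rho)$ contradicts a vertical separator and so forces the non-vertical case, delivering such a minorant $\ell$. With $\ell\leq\rho$ in hand, I would then add to $\ell$ increasing positive multiples of the purely horizontal functional supplied by separating $(x_0,r)$; since that functional is strictly negative on $\dom(\rho)-x_0$, the resulting affine functions remain below $\rho$ everywhere while their values at $x_0$ grow without bound, whence $\rho^{\ast\ast}(x_0)=+\infty=\rho(x_0)$. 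Combining the two subcases gives $\rho\leq\rho^{\ast\ast}$, and with the opening inequality this proves $\rho=\rho^{\ast\ast}$.
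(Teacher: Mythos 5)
This theorem carries no proof in the paper: it is quoted as classical background with a citation to Z\u{a}linescu \citep[Theorems~2.3.3,~2.3.4]{zalinescu}, so there is no in-paper argument to compare against. Your proof is the standard separation-based proof of Fenchel--Moreau --- essentially the one in the cited reference --- and it is correct: the Young--Fenchel inequality gives $\rho^{\ast\ast}\leq\rho$ at the outset; (ii)$\Rightarrow$(i) follows because $\rho^{\ast\ast}$ is a supremum of continuous affine (or constant $\pm\infty$) functions, hence closed and convex; and (i)$\Rightarrow$(ii) is obtained by strictly separating $(x_0,r)$ with $r<\rho(x_0)$ from the nonempty closed convex set $\epi(\rho)$, where you correctly identify and resolve the only delicate point, the vertical separator ($s=0$, possible only when $x_0\notin\dom(\rho)$), by first producing one continuous affine minorant (so $\dom(\rho^\ast)\neq\emptyset$) and then adding arbitrarily large positive multiples of the horizontal separating functional, which stays below $\rho$ on all of $\X$ while blowing up at $x_0$. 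One spot to tighten: in (ii)$\Rightarrow$(i) you assert that the improper possibilities ``collapse to the constants,'' but the computations you cite in support ($\rho\equiv\pm\infty\Rightarrow\rho^{\ast\ast}\equiv\pm\infty$) run in the wrong direction for that implication; what is actually needed is the equally short observation that if $\rho^{\ast\ast}(x_0)=-\infty$ for some $x_0$, then $\rho^\ast(x^\ast)=+\infty$ for every $x^\ast$ (because $\ip{x^\ast,x_0}$ is finite), and hence $\rho^{\ast\ast}\equiv-\infty$.
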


\subsection{Complete lattices of sets}\label{sec:lattice}

Let $\Z$ be a real linear space. We denote by $2^\Z$ the power set of $\Z$, that is, the set of all subsets of $\Z$ including the empty set $\emptyset$ and the full space $\Z$ itself. For a set $A\subseteq\Z$, its \emph{convex hull} is denoted by $\co(A)$ and its convex-analytic \emph{indicator function} $I_{A}\colon \Z\to[0,+\infty]$ is defined by
\[
I_{A}(z)\coloneqq \begin{cases} 0 &  \text{if }z\in A,\\ +\infty & \text{if }z\in A^c\coloneqq \Z\setminus A.\end{cases}
\]
Note that $A$ is a convex set if and only if $I_A$ is a convex function. The next lemma is a less trivial characterization of convex sets in terms indicator functions, which will be crucial in the proof of our main result.

\begin{lemma}\label{lem:ind}
	Let $A,B\subseteq \Z$ be convex sets with $A\subseteq B$. The set $B\setminus A$ is convex if and only if $I_{A}$ is a quasiconcave function on $B$.
\end{lemma}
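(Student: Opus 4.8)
The plan is to exploit the fact that $I_A$ takes only the two values $0$ and $+\infty$ and to translate quasiconcavity into a statement about level sets. Since $A\subseteq B$, the restriction of $I_A$ to $B$ equals $0$ on $A$ and $+\infty$ on $B\setminus A$. I would then invoke the characterization coming directly from the definitions in Section~\ref{sec:extended}: $I_A$ is quasiconcave on $B$ precisely when $-I_A$ is quasiconvex on $B$, and the latter holds iff the lower-level sets $\{z\in B\mid -I_A(z)\leq -r\}=\{z\in B\mid I_A(z)\geq r\}$ are convex for every $r\in\R$. So the task reduces to inspecting these upper-level sets of the two-valued function $I_A$ on the convex set $B$.

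The next step is to compute these sets explicitly. For $r\leq 0$ the condition $I_A(z)\geq r$ holds for every $z\in B$, so the level set is all of $B$, which is convex by hypothesis. For $r>0$ the condition forces $I_A(z)=+\infty$, that is $z\in B\setminus A$, so the level set equals $B\setminus A$. Hence the only nontrivial level set is $B\setminus A$ itself, and quasiconcavity of $I_A$ on $B$ is equivalent to the convexity of $B\setminus A$, which yields both implications of the lemma simultaneously.

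As an alternative route that avoids level sets, one can argue straight from the pointwise inequality. For $x^1,x^2\in B$ and $\lambda\in(0,1)$, the quantity $\min\{I_A(x^1),I_A(x^2)\}$ equals $+\infty$ exactly when both points lie in $B\setminus A$ and equals $0$ otherwise; when it is $0$ the inequality $I_A(\lambda x^1+(1-\lambda)x^2)\geq\min\{I_A(x^1),I_A(x^2)\}$ holds automatically, while when it is $+\infty$ it demands $\lambda x^1+(1-\lambda)x^2\notin A$, i.e.\ membership in $B\setminus A$ since the combination already lies in $B$. I do not expect a genuine obstacle here; the only points requiring care are the bookkeeping around $r=0$ (where the degenerate level set coincides with $B$, whose convexity is given) and reading ``quasiconcave on $B$'' as quasiconcavity of the restriction $I_A|_B$, so that the ambient convexity of $B$ is available to keep convex combinations inside $B$ throughout.
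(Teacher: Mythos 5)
Your proposal is correct, and your ``alternative route'' via the pointwise inequality is exactly the paper's proof: the inequality $I_{A}(\lambda z^1+(1-\lambda)z^2)\geq \min\{I_{A}(z^1),I_{A}(z^2)\}$ is vacuous when either point lies in $A$ (the minimum is then $0$), and when both points lie in $B\setminus A$ it forces the convex combination---which lies in $B$ because $B$ is convex---to avoid $A$, i.e.\ to lie in $B\setminus A$; so quasiconcavity on $B$ is equivalent to convexity of $B\setminus A$. Your primary level-set route is an equivalent repackaging of the same two-valued-function observation: the only upper-level sets of $I_{A}$ restricted to $B$ are $B$ itself (for $r\leq 0$) and $B\setminus A$ (for $r>0$), so quasiconcavity reduces to convexity of $B\setminus A$ once the convexity of $B$ disposes of the trivial level set. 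Neither version is more general or shorter than the other; if anything, the level-set presentation makes the role of the hypothesis that $B$ is convex slightly more explicit (it is what makes the degenerate level set convex), while the paper's pointwise version uses that hypothesis only to keep the convex combination inside $B$. Both of your bookkeeping concerns (the cutoff at $r=0$, and reading quasiconcavity ``on $B$'' as quasiconcavity of the restriction $I_{A}|_B$) are handled correctly.
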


\begin{proof}
	Let $z^1,z^2\in B$ and $\lambda\in(0,1)$. Suppose that $z^1\in A$ or $z^2\in A$. Then, $I_{A}(z^1)=0$ or $I_{A}=0$. Hence, $I_{A}(\lambda z^1+(1-\lambda)z^2)\geq \min\{I_{A}(z^1), I_{A}(z^2)\}=0$ holds trivially. Therefore, $I_{A}$ is quasiconcave on $B$ if and only if $I_{A}(\lambda z^1+(1-\lambda)z^2)\geq \min\{I_{A}(z^1), I_{A}(z^2)\}$ for every $z^1\in B\setminus A$ and $z^2\in B\setminus A$. In the latter condition, we have $\min\{I_{A}(z^1), I_{A}(z^2)\}=+\infty$. Hence, $I_{A}$ is quasiconcave on $B$ if and only if $B\setminus A$ is convex.
\end{proof}

For a family $(A_j)_{j\in J}$ of subsets of $\Z$, where $J$ is an arbitrary nonempty index set, it is easy to verify that, for every $z\in\Z$, it holds
\begin{equation}\label{indicatorcalculus}
	I_{\underset{j\in J}{\bigcup}A_j}(z)=\inf_{j\in J}I_{A_j}(z),\quad I_{\underset{j\in J}{\bigcap}A_j}(z)=\sup_{j\in J}I_{A_j}(z).
\end{equation}

Let $A,B\subseteq\Z$ be given. An immediate observation yields that
\[
I_{A\cap B}(z)=I_A(z)+I_B(z),\quad z\in\Z.
\]
Moreover, the Minkowski sum of $A$ and $B$ is defined as
\[
A+B\coloneqq\cb{z^1+z^2\mid z^1\in A, z^2\in B}
\]
with the convention that $A+\emptyset\coloneqq\emptyset+B\coloneqq\emptyset$. Given $z\in\Z$, we define $z+A\coloneqq \{z\}+A$. For $\lambda\in\R$ and $A\subseteq\Z$, we define $\lambda A\coloneqq\cb{\lambda z\mid z\in A}$ with the convention that $\lambda \emptyset =\emptyset$. A nonempty set $\K\subseteq \Z$ is said to be a \emph{cone} if $\lambda \K=\K$ for every $\lambda>0$. Given a cone $\K\subseteq\Z$, the set $A\subseteq\Z$ is said to be \emph{$\K$-monotone} if $A+\K=A$. 

Let $\leq$ be a reflexive transitive relation on $\Z$. We say that $\Z$ is a \emph{preordered linear space} with respect to $\leq$ if $z^1\leq z^2$ implies $\lambda z^1+z\leq \lambda z^2+z$ for every $z^1,z^2,z\in\Z$ and $\lambda>0$. In this case, $\leq$ is determined uniquely by the convex cone
\[
\Z_+\coloneqq\cb{z\in\Z\mid 0\leq z}
\]
of \emph{positive} elements. In particular, for every $z^1,z^2\in\Z$,
\[
z^1\leq z^2\quad\Leftrightarrow \quad z^2\in z^1+\Z_+.
\]
We also define the cone $\Z_-\coloneqq -\Z_+$ of \emph{negative} elements.

We denote by $\P_+(\Z)$ the set of all $\Z_+$-monotone subsets of $\Z$, that is,
\[
\P_+(\Z)=\{A\subseteq \Z\mid A = A + \Z_+\}.
\]
The set $\P_+(\Z)$ is an order-complete lattice with respect to the partial order $\supseteq$; for a collection $\A$ of sets in $\P_+(\Z)$, the corresponding infimum and supremum are given by
\[
\inf_{\P_+(\Z)} \A=\bigcup_{A\in\A}A,\quad \sup_{\P_+(\Z)}\A=\bigcap_{A\in\A}A,
\]
respectively.

Suppose that $\Z$ is a topological linear space. We assume that the preorder $\leq$ is \emph{upper semicontinuous} in the sense that the convex cone $\Z_+$ is closed with respect to the topology on $\Z$. For a set $A\subseteq\Z$, the closure of $A$ is denoted by $\cl(A)$. Let $\F_+(\Z)$ denote the set of all $\Z_+$-monotone closed subsets of $\Z$, that is,
\[
\F_+(\Z)=\{A\subseteq \Z\mid A = \cl(A+\Z_+)\}.
\]
Similar to $\P_+(\Z)$, $\F_+(\Z)$ is an order-complete lattice with respect to $\supseteq$ with infimum and supremum formulae given by
\[
\inf_{\F_+(\Z)} \A=\cl\of{\bigcup_{A\in\A}A},\quad \sup_{\F_+(\Z)}\A=\bigcap_{A\in\A}A,
\]
respectively, for every $\A\subseteq\F_+(\Z)$.

Suppose further that $\Z$ is a Hausdorff locally convex topological linear space. In this case, we denote by $\Z^\ast$ the topological dual space of $\Z$ and by $\ip{\cdot,\cdot}\colon \Z^\ast\times \Z\to \R$ the bilinear duality mapping between $\Z^\ast$ and $\Z$. For $z^\ast\in\Z^\ast$ and $r\in\R$, we define the halfspace
\[
H(z^\ast,r)\coloneqq\cb{z\in\X\mid\ip{z^\ast,z}\geq r}.
\]
If $r=0$, then we say that the halfspace is \emph{homogeneous}. Given a cone $\K\subseteq\Z$, the \emph{positive dual cone} of $\K$ is defined as
\[
\K^+\coloneqq\cb{z^\ast\in \Z^\ast \mid\forall z\in\K\colon \ip{z^\ast, z}\geq 0},
\]
which is a closed convex cone in $\Z^\ast$ under the weak$^\ast$ topology $\sigma(\Z^\ast,\Z)$. We write $\Z_+^+\coloneqq (\Z_+)^+$ if $\K=\Z_+$.

For a set $A\subseteq \Z$, we define its support function $\sigma_A\colon \Z^\ast\to [-\infty,+\infty]$ by
\[
\sigma_A (z^\ast) \coloneqq \inf_{z\in A}\ip{z^\ast,z},
\]
with the convention that $\sigma_\emptyset(z^\ast)=+\infty$ for each $z^\ast\in\Z^\ast$. Let $\K$ be a cone. If $A$ is a nonempty $\K$-monotone set, then it can be checked that $\sigma_A(z^\ast)=-\infty$ for every $z^\ast\in\X^\ast\setminus\K^+$. Moreover, as a result of the well-known separation theorem for convex sets, $A$ is a $\K$-monotone closed convex set if and only if
\begin{align}\label{separation}
	A=\bigcap_{z^\ast\in\K^+\setminus\{0\}}H(z^\ast,\sigma_{A}(z^\ast))=\bigcap_{z^\ast\in\K^+\setminus\{0\}}\cb{z\in\Z\mid\ip{z^\ast,z}\geq \sigma_{A}(z^\ast)}.
\end{align}
Next, let us consider the special case $\K=\Z_+$. The set of all $\Z_+$-monotone closed convex subsets of $\Z$ is denoted by $\G_+(\Z)$, that is,
\[
\G_+(\Z)=\{A\subseteq\Z\mid A=\cl\co(A+\Z_+)\}.
\]
Similar to $\P_+(\Z)$ and $\F_+(\Z)$, $\G_+(\Z)$ is an order-complete lattice with respect to $\supseteq$ with infimum and supremum formulae given by
\[
\inf_{\G_+(\Z)} \A=\cl\co\of{\bigcup_{A\in\A}A},\quad \sup_{\G_+(\Z)}\A=\bigcap_{A\in\A}A,
\]
respectively, for every $\A\subseteq\G_+(\Z)$.

\subsection{Set-valued functions}\label{sec:function}

Let $\X, \Z$ be preordered real linear spaces whose preorders are upper semicontinuous. With a slight abuse of notation, we denote by $\leq$ both of these preorders. Let $R \colon \X\to 2^\Z$ be a set-valued function. We define the \emph{effective domain} and \emph{graph} of $R$ as
\[
\dom(R)\coloneqq\{x\in\X\mid R(x)\neq\emptyset\},\quad \gr(R)\coloneqq \cb{(x,z)\in\X\times\Z\mid z\in R(x)},
\]
respectively. We say that $R$ is \emph{proper} if $\dom(R)\neq\emptyset$ and $R(x)\neq\Z$ for every $x\in\X$. We define the inverse $R^{-1}\colon\Z\to 2^\X$ of $R$ by
\[
R^{-1}(z)\coloneqq \cb{x\in\X\mid z\in R(x)},\quad z\in\Z.
\]
It is immediate that $(R^{-1})^{-1}=R$, that is,
\[
R(x)=\cb{z\in \Z\mid x\in R^{-1}(z)},\quad x\in\X.
\]

$R$ is said to be \emph{increasing} (resp. \emph{decreasing}) if $x^1\leq x^2$ implies $R(x^1)\supseteq R(x^2)$ (resp. $R(x^1)\subseteq R(x^2)$) for every $x^1,x^2\in\X$. By symmetry, these monotonicity properties can also be defined for $R^{-1}$. The following result formulates the relationship between the monotonicity of $F$ and that of the values of $R^{-1}$ with respect to $\X_+$.

\begin{lemma}\label{lem:dec}
	\citep[Proposition~4]{completeduality} Let $R\colon\X\to 2^{\Z}$ be a set-valued function. Then, $R(x)\in\P_+(\Z)$ for every $x\in \X$ if and only if $R^{-1}$ is decreasing. Moreover, $R$ is decreasing if and only if $R^{-1}(z)\in\P_+(\X)$ for every $z\in\Z$.
\end{lemma}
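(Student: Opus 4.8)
The plan is to reduce both equivalences to the elementary observation that membership in $\P_+$ is a pointwise ``upward-closedness'' condition, after which each claim becomes a matter of reordering quantifiers.

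First I would record the concrete meaning of the two sides of the first equivalence. Since $0\in\Z_+$ forces $A\subseteq A+\Z_+$ for every $A\subseteq\Z$, the defining identity $A=A+\Z_+$ for membership in $\P_+(\Z)$ is equivalent to the reverse inclusion $A+\Z_+\subseteq A$; using $z^1\leq z^2\Leftrightarrow z^2\in z^1+\Z_+$, this says precisely that $A$ is upward closed, i.e. $z^1\in A$ and $z^1\leq z^2$ imply $z^2\in A$. Thus the left-hand condition ``$R(x)\in\P_+(\Z)$ for every $x\in\X$'' unpacks to the statement: for every $x\in\X$ and every pair $z^1\leq z^2$ in $\Z$, $z^1\in R(x)$ implies $z^2\in R(x)$.

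Next I would unpack the right-hand condition. By definition, $R^{-1}$ is decreasing iff $z^1\leq z^2$ implies $R^{-1}(z^1)\subseteq R^{-1}(z^2)$; translating through the equivalence $x\in R^{-1}(z)\Leftrightarrow z\in R(x)$, this reads: for every pair $z^1\leq z^2$ in $\Z$ and every $x\in\X$, $z^1\in R(x)$ implies $z^2\in R(x)$. This is the very same statement as the one obtained for the left-hand condition, with only the order of the universal quantifiers over $x$ and over the pair $(z^1,z^2)$ interchanged. Hence the two conditions are equivalent, which proves the first claim. The second claim then follows by symmetry: applying the first claim to the set-valued function $R^{-1}\colon\Z\to 2^\X$ (the spaces $\X$ and $\Z$ play symmetric roles, both being preordered with upper semicontinuous preorders) and invoking $(R^{-1})^{-1}=R$, I obtain that $R^{-1}(z)\in\P_+(\X)$ for every $z\in\Z$ iff $R$ is decreasing, which is exactly the assertion.

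I do not expect a genuine obstacle here. The entire content is the definitional translation of $\P_+$-membership into upward closure, and the only care required is in handling the inverse relation $x\in R^{-1}(z)\Leftrightarrow z\in R(x)$ correctly and in observing that the two unpacked conditions differ merely by a harmless exchange of universal quantifiers.
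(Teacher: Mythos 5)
Your proof is correct. Note that the paper itself offers no proof of this lemma---it is quoted from the cited reference \citep[Proposition~4]{completeduality}---so there is no in-paper argument to compare against; your reduction (membership in $\P_+$ is exactly upward closure with respect to the preorder, the two conditions then differ only by an exchange of universal quantifiers via $x\in R^{-1}(z)\Leftrightarrow z\in R(x)$, and the second claim follows by symmetry using $(R^{-1})^{-1}=R$) is the natural definitional verification and is essentially the argument of the cited source.
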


Let us assume that $R$ maps into $\P_+(\Z)$. We say that $R$ is \emph{convex} if
\[
R(\lambda x^1+(1-\lambda) x^2)\supseteq \lambda R(x^1)+(1-\lambda) R(x^2)
\]
for every $x^1,x^2\in\X$ and $\lambda\in [0,1]$. If $R$ is decreasing, then we can also define the convexity of $R^{-1}$ in a similar way.

\begin{lemma}\label{lem:convexity}
	\citep[Propositions~3,~4]{completeduality} Let $R\colon\X\to\P_+(\Z)$ be a set-valued function. Then, $R$ is convex if and only if $\gr R$ is convex. If $R$ is further assumed to be decreasing, then these properties are also equivalent to the convexity of $R^{-1}$.
\end{lemma}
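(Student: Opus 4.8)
The plan is to prove the three equivalences in two stages: first I would establish the equivalence between the convexity of $R$ and that of its graph directly from the definitions, and then reduce the statement about $R^{-1}$ to the first stage by exploiting the symmetry of the graph. Note that the graph equivalence does not require the $\Z_+$-monotonicity of the values, so I would prove it for an arbitrary set-valued function and only invoke Lemma~\ref{lem:dec} when $R^{-1}$ enters.

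For the first equivalence I would argue as follows. Assuming $R$ is convex, pick $(x^1,z^1),(x^2,z^2)\in\gr R$ and $\lambda\in[0,1]$; since $z^i\in R(x^i)$, the definition of convexity gives $\lambda z^1+(1-\lambda)z^2\in\lambda R(x^1)+(1-\lambda)R(x^2)\subseteq R(\lambda x^1+(1-\lambda)x^2)$, so $\of{\lambda x^1+(1-\lambda)x^2,\lambda z^1+(1-\lambda)z^2}\in\gr R$ and $\gr R$ is convex. Conversely, assuming $\gr R$ is convex, I would fix $x^1,x^2\in\X$ and $\lambda\in[0,1]$ and take an arbitrary $z\in\lambda R(x^1)+(1-\lambda)R(x^2)$; writing $z=\lambda z^1+(1-\lambda)z^2$ with $z^i\in R(x^i)$, convexity of $\gr R$ yields $z\in R(\lambda x^1+(1-\lambda)x^2)$, which is precisely the inclusion defining convexity of $R$.

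For the second stage I would assume in addition that $R$ is decreasing. By Lemma~\ref{lem:dec} this is equivalent to $R^{-1}(z)\in\P_+(\X)$ for every $z\in\Z$, so $R^{-1}$ maps into $\P_+(\X)$ and the notion of convexity introduced above applies to it. Applying the first equivalence to $R^{-1}$ in place of $R$ shows that $R^{-1}$ is convex if and only if $\gr(R^{-1})$ is convex. It then remains to observe that $\gr(R^{-1})=\cb{(z,x)\in\Z\times\X\mid z\in R(x)}$ is the image of $\gr R$ under the linear isomorphism $(x,z)\mapsto(z,x)$ of $\X\times\Z$ onto $\Z\times\X$; since such an isomorphism preserves convexity in both directions, $\gr R$ is convex if and only if $\gr(R^{-1})$ is. Chaining these equivalences with the first stage delivers the full statement.

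I expect no genuine obstacle here; the only delicate point is the bookkeeping at the degenerate cases. When $\lambda\in\{0,1\}$ or one of $R(x^1),R(x^2)$ is empty, the Minkowski-sum conventions ($\lambda\emptyset=\emptyset$ and $\emptyset+B=\emptyset$) force $\lambda R(x^1)+(1-\lambda)R(x^2)$ to be either empty or equal to one of the two values, so the defining inclusion is immediate. I would therefore dispose of these cases first, so that in the main argument I may assume both values nonempty and $\lambda\in(0,1)$.
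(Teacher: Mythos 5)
Your proof is correct. Since the paper does not actually prove Lemma~\ref{lem:convexity} --- it is imported verbatim from \citet[Propositions~3,~4]{completeduality} --- there is no in-paper argument to compare against; your route (establishing the graph equivalence directly from the definitions for an arbitrary set-valued function, then transferring it to $R^{-1}$ via the coordinate flip $(x,z)\mapsto(z,x)$, with Lemma~\ref{lem:dec} invoked only to guarantee that $R^{-1}$ maps into $\P_+(\X)$ so that the paper's notion of convexity applies to it) is the natural and expected one, and your separate treatment of the degenerate cases ($\lambda\in\{0,1\}$ or an empty value, using the stated Minkowski-sum conventions) is handled correctly.
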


From now on, we assume that $\Z$ is a Hausdorff locally convex topological linear space. For each $z^\ast\in\Z^\ast$, the function $\varphi_{R,z^\ast}\colon\X\to[-\infty,+\infty]$ defined by
\begin{equation}\label{eq:sc}
	\varphi_{R,z^\ast}(x)\coloneqq \sigma_{R(x)}(z^\ast)=\inf_{z\in R(x)}\ip{z^\ast,z},\quad x\in\X,
\end{equation}
is called a \emph{(linear) scalarization} of $R$.

\begin{remark}\label{rem:scalarization}
	For each $x\in\X$ and $z^\ast\in\Z^\ast$, since $R(x)\in\P_+(\Z)$, it is easy to check that $\varphi_{R,z^\ast}(x)=-\infty$ if $z^\ast\notin \Z_+^+$.
\end{remark}

The next result characterizes the properness/convexity of a set-valued function in terms of the properness/convexity of its scalarizations as extended real-valued functions.

\begin{lemma}\label{lem:scalarization}
	\citep[Lemma 4.20]{setoptsurv} Let $R\colon\X\to\G_+(\Z)$ be a set-valued function. Then, the following results hold:
	\begin{enumerate}[(i)]
		\item $R$ is proper if and only if there exists $z^\ast\in \Z_+^+\setminus\{0\}$ such that $\varphi_{R,z^\ast}$ is proper.
		\item $R$ is convex if and only if $\varphi_{R,z^\ast}$ is a convex function for each $z^\ast\in\Z^\ast$.
	\end{enumerate}
\end{lemma}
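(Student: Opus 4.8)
The plan is to reduce everything to the support-function representation \eqref{separation}, which identifies each value $R(x)\in\G_+(\Z)$ with the family of scalars $\of{\varphi_{R,z^\ast}(x)}_{z^\ast\in\Z_+^+\sm\{0\}}$ via the membership test
\[
 z\in R(x)\quad\Longleftrightarrow\quad \ip{z^\ast,z}\geq\varphi_{R,z^\ast}(x)\ \text{ for all }z^\ast\in\Z_+^+\sm\{0\}.
\]
Before either part I would record two elementary facts used throughout: since $\sigma_\emptyset=+\infty$ while $\sigma_A(z^\ast)\leq\ip{z^\ast,z_0}<+\infty$ for any $z_0\in A\neq\emptyset$, one has $\dom\varphi_{R,z^\ast}=\dom R$ for \emph{every} $z^\ast\in\Z^\ast$; and by Remark~\ref{rem:scalarization}, $\varphi_{R,z^\ast}\equiv-\infty$ on $\dom R$ whenever $z^\ast\notin\Z_+^+$. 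Thus both properness and convexity of a scalarization are really assertions about the scalars $\varphi_{R,z^\ast}(x)$ for $x\in\dom R$ and $z^\ast\in\Z_+^+$.

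For part (ii) I would argue from the definition. For the forward implication, fix $z^\ast\in\Z^\ast$; if $z^\ast\notin\Z_+^+$ then $\varphi_{R,z^\ast}$ is $-\infty$ on $\dom R$ and $+\infty$ off it, hence convex because $\dom R$ (a projection of the convex $\gr R$) is convex, so we may assume $z^\ast\in\Z_+^+$. For $x^1,x^2\in\dom R$ and $\lambda\in(0,1)$, combining the inclusion $R(\lambda x^1+(1-\lambda)x^2)\supseteq\lambda R(x^1)+(1-\lambda)R(x^2)$ with the antitonicity of $\sigma$ under inclusion and its additivity over Minkowski combinations, $\sigma_{\lambda A+(1-\lambda)B}=\lambda\sigma_A+(1-\lambda)\sigma_B$, gives $\varphi_{R,z^\ast}(\lambda x^1+(1-\lambda)x^2)\leq\lambda\varphi_{R,z^\ast}(x^1)+(1-\lambda)\varphi_{R,z^\ast}(x^2)$, which is convexity on $\dom\varphi_{R,z^\ast}=\dom R$. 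For the converse, assume every $\varphi_{R,z^\ast}$ is convex and take $z^i\in R(x^i)$, $z\coloneqq\lambda z^1+(1-\lambda)z^2$. For each $z^\ast\in\Z_+^+\sm\{0\}$ I would estimate $\ip{z^\ast,z}=\lambda\ip{z^\ast,z^1}+(1-\lambda)\ip{z^\ast,z^2}\geq\lambda\varphi_{R,z^\ast}(x^1)+(1-\lambda)\varphi_{R,z^\ast}(x^2)\geq\varphi_{R,z^\ast}(\lambda x^1+(1-\lambda)x^2)$, the last inequality being convexity; the membership test then yields $z\in R(\lambda x^1+(1-\lambda)x^2)$, i.e. convexity of $R$ (the case where some $R(x^i)=\emptyset$ is trivial). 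Lemma~\ref{lem:convexity} may serve as an alternative bookkeeping device here.

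Part (i) splits unevenly. The backward implication is immediate: if $\varphi_{R,z^\ast}$ is proper for some $z^\ast\in\Z_+^+\sm\{0\}$, then $\dom R=\dom\varphi_{R,z^\ast}\neq\emptyset$, while $R(x)=\Z$ for some $x$ would force $\varphi_{R,z^\ast}(x)=\sigma_\Z(z^\ast)=-\infty$ (as $z^\ast\neq0$), contradicting properness; hence $R$ is proper. The forward implication is the crux and is where I expect essentially all the difficulty to lie, because properness only supplies, at each \emph{single} $x\in\dom R$, a direction $z^\ast\in\Z_+^+\sm\{0\}$ with $\varphi_{R,z^\ast}(x)>-\infty$ (by \eqref{separation}), whereas one must produce one functional that bounds every scalarization from below simultaneously. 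The natural route is to separate, in the product space $\X\times\Z$, a point $(x_0,\bar z)$ with $x_0\in\dom R$ and $\bar z\notin R(x_0)$ from the convex set $\gr R$ (cf. Lemma~\ref{lem:convexity}), obtaining $(x^\ast,z^\ast)$ and $\gamma\in\R$ with $\ip{x^\ast,x}+\ip{z^\ast,z}\geq\gamma$ on $\gr R$. Then $\varphi_{R,z^\ast}(x)\geq\gamma-\ip{x^\ast,x}>-\infty$ for all $x\in\dom R$; using $R(x)+\Z_+=R(x)$ and letting a positive multiple of any $k\in\Z_+$ run to infinity forces $\ip{z^\ast,k}\geq0$, i.e. $z^\ast\in\Z_+^+$; and strictness of the separation on the slice over $x_0$ gives $\ip{z^\ast,z}>\ip{z^\ast,\bar z}$ for $z\in R(x_0)$, whence $z^\ast\neq0$.

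The step I would scrutinize most is the legitimacy of this product separation, and I expect it to be the genuine obstacle. Convexity of $\gr R$ is indispensable: two halfspace values of $R$ whose barrier cones meet only at $\{0\}$ already give a proper $R$ for which \emph{no} single scalarization is proper, so the forward implication truly belongs to the convex setting. Moreover, separating $(x_0,\bar z)$ requires enough regularity to guarantee $(x_0,\bar z)\notin\cl(\gr R)$, so I would either separate from $\cl(\gr R)$ after checking that its slice over some domain point is still a proper subset of $\Z$, or bring in the closedness of $\gr R$; pinning down exactly this regularity, and confirming that the convex structure of $R$ forces the barrier cones of the values $R(x)$ to share a common nonzero direction, is where the real work of the proof concentrates.
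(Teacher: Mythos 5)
The paper itself offers no proof of this lemma: it is quoted from \citet[Lemma~4.20]{setoptsurv} as background and is never proved (nor invoked in any later proof), so your proposal can only be judged on its own merits. On that basis, your part (ii) is correct and complete in both directions, and so is the backward implication of part (i): the identity $\dom(\varphi_{R,z^\ast})=\dom(R)$, the support-function calculus for Minkowski combinations, and the membership test \eqref{separation} are exactly the right tools, and your handling of the values $-\infty$ and of $z^\ast\notin\Z_+^+$ is careful. Your claim that the forward implication of (i) is false as literally stated is also correct: with $\Z=\R^2$, $\Z_+=\R^2_+$, the function $R(x^1)=\cb{z\in\R^2\mid z_1\geq 0}$, $R(x^2)=\cb{z\in\R^2\mid z_2\geq 0}$, $R\equiv\emptyset$ elsewhere, is proper and $\G_+(\Z)$-valued, yet $\sigma_{R(x^1)}$ and $\sigma_{R(x^2)}$ are finite only on $\R_+e_1$ and $\R_+e_2$ respectively, so no single $z^\ast\neq 0$ gives a proper scalarization; the statement only has a chance of being true for convex $R$.

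The genuine gap is the one you flagged and left open: you never produce a pair $(x_0,\bar z)$ with $x_0\in\dom(R)$, $\bar z\notin R(x_0)$, and $(x_0,\bar z)\notin\cl(\gr(R))$, and without such a point the Hahn--Banach step has nothing to act on. This is not a removable technicality, because even for convex $R$ the forward implication of (i) fails in infinite dimensions. Concretely, let $\Z=\R^2$, $\Z_+=\R^2_+$, and for $n\in\mathbb{N}$ let $A_n=\cb{z\in\R^2\mid z_2\geq g_n(z_1)}$, where $g_n(t)=\alpha_n(1-t)$ for $t\leq 1$, $g_n(t)=-\alpha_n\ln t$ for $t\geq 1$, and $\alpha_n\downarrow 0$. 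Each $A_n$ lies in $\G_+(\Z)$, and $\sigma_{A_n}(z^\ast)>-\infty$ exactly for $z^\ast$ in the cone $S_n=\cb{(a,b)\mid b>0,\ 0<a\leq\alpha_n b}\cup\cb{0}$; these cones are nested, every \emph{finite} intersection is nontrivial, but $\bigcap_n S_n=\cb{0}$. Take $\X=\ell^1$ with unit vectors $(e_n)$, let $\Delta$ be the convex hull of $\cb{e_n\mid n\in\mathbb{N}}$, and set $R(x)\coloneqq\cl\of{\sum_n\lambda_n A_n}$ for $x=\sum_n\lambda_n e_n\in\Delta$ (a finite Minkowski sum) and $R(x)\coloneqq\emptyset$ otherwise. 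Then $R$ maps into $\G_+(\Z)$ and is convex, since convexity only ever tests finite combinations and $\lambda A_n+\mu A_n=(\lambda+\mu)A_n$; it is proper, since $\sigma_{R(x)}=\sum_n\lambda_n\sigma_{A_n}$ is finite on the smallest cone $S_n$ occurring in the support of $x$; yet for every $z^\ast\in\Z_+^+\sm\cb{0}$ there is some $n$ with $z^\ast\notin S_n$, whence $\varphi_{R,z^\ast}(e_n)=-\infty$, so no scalarization is proper.

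Consequently, your separation scheme can be turned into a proof only after importing an extra hypothesis, and you should say which one. If $\gr(R)$ is closed, then $(x_0,\bar z)\notin\gr(R)=\cl(\gr(R))$ for any $\bar z\notin R(x_0)$, strict separation applies, and your three consequences (the lower bound $\varphi_{R,z^\ast}(x)\geq\gamma-\ip{x^\ast,x}$, the monotonicity argument giving $z^\ast\in\Z_+^+$, and strictness over the slice giving $z^\ast\neq 0$) correctly finish the argument. Alternatively, if $\X$ and $\Z$ are finite-dimensional, then for $x_0\in\ri(\dom(R))$ the slice $\cb{z\mid(x_0,z)\in\cl(\gr(R))}$ equals $\cl(R(x_0))=R(x_0)$ by the line segment principle, so a separable point exists. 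As written, part (i)-forward is a plan rather than a proof, and the plan cannot be executed at the level of generality in which the lemma is stated.
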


For future use, let us introduce the set
\begin{equation}\label{eq:ZR}
	\Z^\ast_R\coloneqq\cb{z^\ast\in\Z^\ast\setminus\{0\}\mid \varphi_{R,z^\ast}\text{ is proper}}.
\end{equation}
Note that $\Z^\ast_R$ is a cone and $\Z^\ast_R\subseteq \Z^+_+\setminus\{0\}$.

The set-valued function $R\colon \X\to\P_+(\Z)$ is called \emph{closed-valued} if $R(x)\in\F_+(\Z)$ for every $x\in\X$, \emph{lower level-closed} if $R^{-1}(z)$ is a closed set for every $z\in\Z$, \emph{closed} if $\gr R$ is a closed set with respect to the product topology on $\X\times\Z$. Clearly, if $R$ is closed, then it is closed-valued and lower level-closed. Let $\mathcal{N}(\X)$ be a neighborhood base of $0\in\X$. Suppose that $R$ is closed-valued. Given $x\in\X$, $R$ is called \emph{lattice-lower semicontinuous at $x$} if
\begin{align*}
R(x)\supseteq \liminf_{x^\prime\rightarrow x}R(x^\prime)\coloneqq &\sup_{\F_+(\Z)}\cb{\inf_{\F_+(\Z)} \cb{R(x^\prime)\mid x^\prime\in x + U} \mid U\in\mathcal{N}(\X)}\\
=& \bigcap_{U\in\mathcal{N}(\X)}\cl\of{\bigcup_{x^\prime\in x+U}R(x^\prime)}.
\end{align*}
In this case, we indeed have $R(x)=\liminf_{x^\prime\rightarrow x}R(x^\prime)$. Then, $R$ is called \emph{lattice-lower semicontinuous} if it is lattice-lower semicontinuous at every $x\in\X$. The function $R$ is called \emph{scalarly lower semicontinuous} (at $x$) if $\varphi_{R,z^\ast}$ is lower semicontinuous (at $x$) for each $z^\ast\in\Z^+_+\setminus\{0\}$.

\begin{lemma}\label{lem:closedness}
	\citep[Proposition~4.9]{setoptsurv} Let $R\colon \X\to\F_+(\Z)$ be a set-valued function. Then, it is closed if and only if it is lattice-lower semicontinuous.
\end{lemma}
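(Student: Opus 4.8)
The plan is to reduce both implications to a single topological identity linking the closure of the graph to the lattice limit inferior: for every $x\in\X$ and $z\in\Z$,
\[
(x,z)\in\cl(\gr R)\quad\Longleftrightarrow\quad z\in\liminf_{x^\prime\rightarrow x}R(x^\prime).
\]
Here I would work throughout with the already-established representation $\liminf_{x^\prime\rightarrow x}R(x^\prime)=\bigcap_{U\in\mathcal{N}(\X)}\cl\of{\bigcup_{x^\prime\in x+U}R(x^\prime)}$, which comes from the infimum and supremum formulae in $\F_+(\Z)$, so that no further lattice computation is needed. Once this identity is in hand the lemma is immediate: $R$ is closed precisely when $\gr R=\cl(\gr R)$, i.e. when $(x,z)\in\cl(\gr R)$ forces $z\in R(x)$ for all $(x,z)$; by the identity this is exactly the condition that $z\in\liminf_{x^\prime\rightarrow x}R(x^\prime)$ implies $z\in R(x)$, that is $R(x)\supseteq\liminf_{x^\prime\rightarrow x}R(x^\prime)$ for every $x\in\X$, which is lattice-lower semicontinuity.

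To prove the identity I would fix neighborhood bases $\mathcal{N}(\X)$ of $0\in\X$ and $\mathcal{N}(\Z)$ of $0\in\Z$ and use that the sets $(x+U)\times(z+V)$ with $U\in\mathcal{N}(\X)$, $V\in\mathcal{N}(\Z)$ form a neighborhood base of $(x,z)$ for the product topology, together with the standard fact that a point lies in the closure of a set if and only if every basic neighborhood of it meets that set. For the forward direction, assume $(x,z)\in\cl(\gr R)$ and fix $U\in\mathcal{N}(\X)$; given any $V\in\mathcal{N}(\Z)$, the neighborhood $(x+U)\times(z+V)$ meets $\gr R$ at some $(x^\prime,z^\prime)$, whence $x^\prime\in x+U$, $z^\prime\in z+V$, and $z^\prime\in R(x^\prime)$, so that $z^\prime$ belongs to $\bigcup_{x^\prime\in x+U}R(x^\prime)$; since $V$ was arbitrary, $z\in\cl\of{\bigcup_{x^\prime\in x+U}R(x^\prime)}$, and intersecting over $U$ gives $z\in\liminf_{x^\prime\rightarrow x}R(x^\prime)$. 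The converse direction reverses exactly the same chain of quantifiers: from $z\in\cl\of{\bigcup_{x^\prime\in x+U}R(x^\prime)}$ for every $U$ one extracts, for each basic neighborhood $(x+U)\times(z+V)$ of $(x,z)$, a point $(x^\prime,z^\prime)\in\gr R$ lying inside it, so every neighborhood of $(x,z)$ meets $\gr R$ and hence $(x,z)\in\cl(\gr R)$.

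There is no genuine analytic obstacle here; the content lies entirely in correctly matching the product-topology neighborhoods against the nested closures and unions defining $\liminf$, so the work is careful quantifier bookkeeping rather than estimation. The only points worth flagging are that $R$ mapping into $\F_+(\Z)$ is used solely to make the lattice $\liminf$ formula available, as closedness and $\Z_+$-monotonicity of the individual values play no role in the equivalence itself, and that the inclusion $R(x)\subseteq\liminf_{x^\prime\rightarrow x}R(x^\prime)$ holds automatically because $x\in x+U$ for every $U\in\mathcal{N}(\X)$; this last remark explains why lattice-lower semicontinuity in fact upgrades to the equality $R(x)=\liminf_{x^\prime\rightarrow x}R(x^\prime)$ noted after the definition.
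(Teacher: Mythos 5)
Your proof is correct. Note that the paper does not prove this lemma at all — it is imported verbatim from \citet[Proposition~4.9]{setoptsurv} — so there is no internal proof to compare against; your argument, which reduces both implications to the single identity
\[
\cl(\gr R)=\cb{(x,z)\in\X\times\Z\;\middle|\; z\in\liminf_{x^\prime\rightarrow x}R(x^\prime)}
\]
verified by matching product-topology basic neighborhoods $(x+U)\times(z+V)$ against the nested closures and unions defining the lattice $\liminf$, is exactly the standard argument behind the cited result, and your side remarks (that the values being in $\F_+(\Z)$ is used only to make the lattice formula for $\liminf$ available, and that the reverse inclusion $R(x)\subseteq\liminf_{x^\prime\rightarrow x}R(x^\prime)$ is automatic) are accurate.
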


Unlike the case of convexity (see Lemma~\ref{lem:scalarization}), lattice-lower semicontinuity of a set-valued function is not equivalent to the lower semicontinuity of its scalarizations. For functions with closed convex values, only a one-way implication holds as stated by the next result.

\begin{lemma}\label{lem:scalarization2}
	\citep[Proposition~4.23]{setoptsurv} Let $R\colon \X\to\G_+(\Z)$ be a set-valued function and fix $x\in\X$. If $R$ is scalarly lower semicontinuous at $x$, then $R$ is lattice-lower semicontinuous at $x$.
\end{lemma}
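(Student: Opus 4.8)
The plan is to reduce the set inclusion defining lattice-lower semicontinuity to a family of scalar inequalities by means of the separation formula \eqref{separation}, and then to transport those inequalities through the closure and union operations that build the $\liminf$. Since $R(x)\in\G_+(\Z)$ is a $\Z_+$-monotone closed convex set, \eqref{separation} gives $R(x)=\bigcap_{z^\ast\in\Z_+^+\setminus\{0\}}\{z\in\Z\mid\ip{z^\ast,z}\geq\varphi_{R,z^\ast}(x)\}$. Because lattice-lower semicontinuity at $x$ means exactly $\liminf_{x'\to x}R(x')\subseteq R(x)$, the whole task becomes: show that every $z\in\liminf_{x'\to x}R(x')$ satisfies $\ip{z^\ast,z}\geq\varphi_{R,z^\ast}(x)$ for each $z^\ast\in\Z_+^+\setminus\{0\}$.

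First I would fix such a point $z\in\liminf_{x'\to x}R(x')=\bigcap_{U\in\mathcal{N}(\X)}\cl(\bigcup_{x'\in x+U}R(x'))$ and an arbitrary $z^\ast\in\Z_+^+\setminus\{0\}$. For each $U\in\mathcal{N}(\X)$, the membership $z\in\cl(\bigcup_{x'\in x+U}R(x'))$ trivially forces $\ip{z^\ast,z}\geq\sigma_{\cl(\bigcup_{x'\in x+U}R(x'))}(z^\ast)$. The next, routine but essential, step is the computation that the support function is insensitive to closures (because $\ip{z^\ast,\cdot}$ is continuous) and distributes over unions as an infimum (the infimum over a union being the infimum of the infima), so that $\sigma_{\cl(\bigcup_{x'\in x+U}R(x'))}(z^\ast)=\inf_{x'\in x+U}\varphi_{R,z^\ast}(x')$.

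Then I would take the supremum over $U\in\mathcal{N}(\X)$ to obtain $\ip{z^\ast,z}\geq\sup_{U\in\mathcal{N}(\X)}\inf_{x'\in x+U}\varphi_{R,z^\ast}(x')=\liminf_{x'\to x}\varphi_{R,z^\ast}(x')$, which is precisely the lower limit appearing in the definition of lower semicontinuity in Section~\ref{sec:extended}. Invoking scalar lower semicontinuity of $R$ at $x$ — which is assumed exactly for $z^\ast\in\Z_+^+\setminus\{0\}$ — gives $\varphi_{R,z^\ast}(x)\leq\liminf_{x'\to x}\varphi_{R,z^\ast}(x')\leq\ip{z^\ast,z}$. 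Since $z^\ast$ was arbitrary in $\Z_+^+\setminus\{0\}$, the representation \eqref{separation} yields $z\in R(x)$, and the inclusion $\liminf_{x'\to x}R(x')\subseteq R(x)$ follows.

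The argument is elementary once the strategy is set, so I do not expect a genuine obstacle; the only points requiring care are the bookkeeping in the middle step (justifying $\sigma_{\cl(\bigcup_{x'\in x+U}R(x'))}(z^\ast)=\inf_{x'\in x+U}\varphi_{R,z^\ast}(x')$, including the harmless boundary cases where some value is $\pm\infty$) and the observation that the family of functionals over which scalar lower semicontinuity is hypothesized coincides with the index set $\Z_+^+\setminus\{0\}$ of the separation formula, so that no additional directions $z^\ast$ need to be controlled. This matching of index sets is exactly what makes the one-way implication work, and it also makes transparent why the converse fails in general.
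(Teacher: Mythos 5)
Your proof is correct, and since the paper itself only cites this result from \citet[Proposition~4.23]{setoptsurv} without reproducing an argument, the relevant comparison is with the cited reference, whose proof is essentially the one you give: reduce membership in $R(x)$ to the scalar inequalities $\ip{z^\ast,z}\geq\varphi_{R,z^\ast}(x)$ via \eqref{separation}, pass the support function through the closure and the union so that $\sigma_{\cl(\bigcup_{x'\in x+U}R(x'))}(z^\ast)=\inf_{x'\in x+U}\varphi_{R,z^\ast}(x')$, take the supremum over $U\in\mathcal{N}(\X)$, and invoke lower semicontinuity of each $\varphi_{R,z^\ast}$ at $x$. All steps check out, including the matching of the index set $\Z_+^+\setminus\{0\}$ in the definition of scalar lower semicontinuity with that of the separation formula, so nothing further is needed.
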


Finally, we review the set-valued Fenchel-Moreau theorem. To that end, we define the \emph{(negative) conjugate function} $-R^\ast\colon\X^\ast\times\Z_+^+\setminus\{0\}\to\P_+(\Z)$ of $R$ by
\[
-R^\ast(x^\ast,z^\ast)\coloneqq\inf_{\F_+(\Z)}\cb{\cl\of{R(x)+H(z^\ast,\ip{x^\ast,-x})}\mid x\in\X}=\cl\of{\bigcup_{x\in\X}\of{R(x)+H(z^\ast,\ip{x^\ast,-x})}}
\] 
for each $x^\ast\in\X^\ast$ and $z^\ast\in\Z_+^+\setminus\{0\}$. Then, the \emph{biconjugate function} $R^{\ast\ast}\colon \X\to \P_+(\Z)$ of $R$ by
\begin{align*}
	R^{\ast\ast}(x)\coloneqq&\sup_{\F_+(\X)}\cb{-R^{\ast}(x^\ast,z^\ast)+H(z^\ast,\ip{x^\ast,x}) \mid x^\ast\in\X^\ast,\ z^\ast\in \Z_+^+\setminus\{0\}}\\
	=&\bigcap_{\substack{x^\ast\in\X^\ast,\\ z^\ast\in\Z_+^+\setminus\{0\}}}\of{-R^{\ast}(x^\ast,z^\ast)+H(z^\ast,\ip{x^\ast,x})}
\end{align*}
for each $x\in\X$.

\begin{remark}\label{rem:SVconjugate}
	\begin{enumerate}[(i)]
		\item The minus sign in $-R^\ast$ is part of the notation. Indeed, the definition of $-R^\ast$ mimics that of the negative of the conjugate function for an extended real-valued function $f\colon\X\to[-\infty,+\infty]$:
		\[
		-(f^\ast(x^\ast))=-\sup_{x\in\X}\of{\ip{x^\ast,x}-f(x)}=\inf_{x\in\X}\of{f(x)+\ip{x^\ast,-x}},\quad x^\ast\in\X^\ast.
		\]
		Similarly, the definition of $R^{\ast\ast}$ mimics that of $f^{\ast\ast}$:
		\[
		f^{\ast\ast}(x)=\sup_{x^\ast\in\X^\ast}\of{-f^\ast(x^\ast)+\ip{x^\ast,x}},\quad x\in\X.
		\]
		The main difference between the scalar and set-valued cases is that we have an extra dual variable $z^\ast\in \Z_+^+\setminus\{0\}$ in the latter case to scalarize the set-valued function. For the same reason, the bilinear form $(x^\ast,x)\mapsto \ip{x^\ast,x}$ of the scalar case is replaced with the halfspace-valued function $(z^\ast,x^\ast,x)\mapsto H(z^\ast,\ip{x^\ast,x})$. In particular, both $-R^\ast$ and $R^{\ast\ast}$ map into $\G_+(\Z)$.
		\item The conjugate and biconjugate of $R$ can be expressed in terms of those of its scalarizations. Using the definitions, it is easy to check that
		\[
		-R^\ast(x^\ast,z^\ast)=\cb{z\in\Z\mid \ip{z^\ast,z}\geq - \varphi_{R,z^\ast}^\ast(x^\ast)}
		\]
		for each $x^\ast\in\X^\ast$, $z^\ast\in\Z_+^+\setminus\{0\}$, and
		\[
		R^{\ast\ast}(x)=\bigcap_{z^\ast\in\mathcal{K}}\cb{z\in\Z\mid \ip{z^\ast,z}\geq \varphi_{R,z^\ast}^{\ast\ast}(x)}
		\]
		for each $x\in\X$, where $\mathcal{K}\subseteq\Z^\ast$ is a set such that $\{z^\ast\in\Z^\ast\setminus\{0\}\mid \varphi_{R,z^\ast}^{\ast\ast}\text{ is proper}\}\subseteq\mathcal{K}$.
	\end{enumerate}
\end{remark}

\begin{theorem}\label{thm:SVFM}
	(\citet[Theorem~2]{ham09}, \citet[Theorem~5.8]{setoptsurv}) Let $R\colon \X\to\P_+(\Z)$ be a set-valued function. The following are equivalent:
	\begin{enumerate}[(a)]
		\item $R$ is a proper closed convex set-valued function, or $R\equiv \emptyset$, or $R\equiv \Z$.
		\item $R=R^{\ast\ast}$, that is, for each $x\in\X$, we have
		\[
		R(x)=\bigcap_{\substack{x^\ast\in \X^\ast,\\  z^\ast\in \mathcal{K}}} \of{-R^\ast(x^\ast,z^\ast)+H(z^\ast,\ip{x^\ast,x})},
		\]
		where $\mathcal{K}\subseteq\Z^\ast$ is a set such that $\{z^\ast\in\Z^\ast\setminus\{0\}\mid \varphi_{R,z^\ast}^{\ast\ast}\text{ is proper}\}\subseteq\mathcal{K}$.
	\end{enumerate}
\end{theorem}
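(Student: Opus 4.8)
The plan is to reduce the set-valued biconjugation to the scalar Fenchel--Moreau theorem (Theorem~\ref{thm:FM}) applied to the scalarizations of $R$, using the identities of Remark~\ref{rem:SVconjugate}(ii) as the bridge between the two settings and the separation formula~\eqref{separation} as the description of $R$ itself. First I would dispose of the trivial cases: if $R\equiv\emptyset$ then every scalarization is identically $+\infty$, while if $R\equiv\Z$ then every scalarization with $z^\ast\neq 0$ is identically $-\infty$ (as $\sigma_\Z(z^\ast)=-\infty$), and in both cases a direct computation of the halfspace intersection defining $R^{\ast\ast}$ returns $\emptyset$ and $\Z$, respectively. It then remains to treat the proper closed convex case, which carries the substance.

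For (a)$\Rightarrow$(b), suppose $R$ is proper, closed, and convex. By Lemma~\ref{lem:convexity} the graph $\gr R$ is convex, each value $R(x)$ is a slice of $\gr R$ and hence convex, and since $R$ is closed it is closed-valued, so $R(x)\in\G_+(\Z)$ for every $x$. The separation formula~\eqref{separation} then reads $R(x)=\bigcap_{z^\ast\in\Z_+^+\setminus\{0\}}\cb{z\in\Z\mid\ip{z^\ast,z}\geq\varphi_{R,z^\ast}(x)}$, whereas Remark~\ref{rem:SVconjugate}(ii) gives $R^{\ast\ast}(x)=\bigcap_{z^\ast\in\mathcal{K}}\cb{z\in\Z\mid\ip{z^\ast,z}\geq\varphi_{R,z^\ast}^{\ast\ast}(x)}$. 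Since $\varphi_{R,z^\ast}^{\ast\ast}\leq\varphi_{R,z^\ast}$ throughout, and since by Remark~\ref{rem:scalarization} (and the definition of the conjugate) any $z^\ast\in\mathcal{K}\setminus\Z_+^+$ yields $\varphi_{R,z^\ast}^{\ast\ast}\equiv-\infty$ and hence the trivial halfspace $\Z$, comparing the two intersections gives the inclusion $R^{\ast\ast}(x)\supseteq R(x)$ at once. The work is in the reverse inclusion, which I would establish at the level of graphs: unwinding the definitions, $(x,z)\in\gr R^{\ast\ast}$ holds iff $\ip{x^\ast,x}-\ip{z^\ast,z}\leq\varphi_{R,z^\ast}^\ast(x^\ast)=\sup_{(x',z')\in\gr R}\of{\ip{x^\ast,x'}-\ip{z^\ast,z'}}$ for all $x^\ast\in\X^\ast$ and $z^\ast\in\mathcal{K}$; that is, $\gr R^{\ast\ast}$ is the intersection of all closed halfspaces containing $\gr R$ whose outer normal has the form $(x^\ast,-z^\ast)$ with $z^\ast\in\mathcal{K}$. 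Because $\gr R$ is $\Z_+$-monotone in its second component, any supporting halfspace of $\gr R$ with a finite bound is automatically forced to have a normal of exactly this form with $z^\ast\in\Z_+^+$; combined with the separation theorem this recovers $\gr R$, so $\gr R^{\ast\ast}=\gr R$ and $R=R^{\ast\ast}$.

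The main obstacle is that the naive route --- applying the scalar theorem to each scalarization to conclude $\varphi_{R,z^\ast}^{\ast\ast}=\varphi_{R,z^\ast}$ --- is unavailable, because lattice-lower semicontinuity of $R$ does \emph{not} imply lower semicontinuity of the individual scalarizations, Lemma~\ref{lem:scalarization2} running only in the opposite direction. This is precisely why I would argue at the level of $\gr R$ as above: the graph description of $R^{\ast\ast}$ uses only the support function of the \emph{closed} convex set $\gr R$, so no pointwise closedness of scalarizations is needed. The genuine difficulty that remains is the exclusion of the degenerate direction $z^\ast=0$, whose halfspaces $\cb{(x,z)\mid\ip{x^\ast,x}\leq c}$ separate in the $\X$-direction and may be needed to cut out the boundary of $\dom R$. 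Here properness is indispensable: I would use Lemma~\ref{lem:scalarization}(i) to fix some $\hat z^\ast\in\Z_+^+\setminus\{0\}$ with $\varphi_{R,\hat z^\ast}$ proper and replace any needed normal $(x^\ast,0)$ by $(x^\ast,-\varepsilon\hat z^\ast)$, verifying that for sufficiently small $\varepsilon>0$ the perturbed closed halfspace still separates the given point $(x_0,z_0)\notin\gr R$. This perturbation, which trades the missing direction for an admissible one by exploiting $\Z_+$-monotonicity, is the crux of the proof.

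The converse implication (b)$\Rightarrow$(a) is comparatively routine. By Remark~\ref{rem:SVconjugate}(i) the biconjugate always maps into $\G_+(\Z)$, so $R=R^{\ast\ast}$ is closed-valued with convex values. Writing $\gr R=\bigcap_{z^\ast\in\mathcal{K}}\cb{(x,z)\mid\ip{z^\ast,z}-\varphi_{R,z^\ast}^{\ast\ast}(x)\geq 0}$ exhibits $\gr R$ as an intersection of sets that are closed and convex, each $\varphi_{R,z^\ast}^{\ast\ast}$ being a closed convex function of $x$ so that $(x,z)\mapsto\ip{z^\ast,z}-\varphi_{R,z^\ast}^{\ast\ast}(x)$ is concave and upper semicontinuous; hence $\gr R$ is closed and convex, and $R$ is closed and convex by Lemmas~\ref{lem:convexity} and~\ref{lem:closedness}. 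A short dichotomy on properness --- separating the case $\dom R=\emptyset$, the case in which some value equals $\Z$, and the remaining proper case --- then places $R$ into exactly one of the three alternatives listed in (a), completing the equivalence.
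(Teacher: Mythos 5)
The paper does not prove this theorem; it is quoted from \citet[Theorem~2]{ham09} and \citet[Theorem~5.8]{setoptsurv}, so your attempt can only be measured against the standard argument in those sources. Your architecture is exactly that standard one: dispose of $R\equiv\emptyset$ and $R\equiv\Z$, then work at the level of the closed convex set $\gr(R)$ rather than with scalarizations --- correctly diagnosing that lattice-lower semicontinuity does not make $\varphi_{R,z^\ast}$ lower semicontinuous, so the scalar Fenchel--Moreau theorem cannot be applied pointwise --- identify $\gr(R^{\ast\ast})$ with an intersection of halfspaces with normals $(x^\ast,-z^\ast)$, and repair the vertical directions $z^\ast=0$ by a perturbation that exploits properness. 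The converse direction is also fine, modulo spelling out that if some $R(x_0)=\Z$ then every $\varphi_{R,z^\ast}^{\ast\ast}\equiv-\infty$, so the defining intersection collapses to $\Z$ at \emph{every} $x$, which is what forces the alternative $R\equiv\Z$.

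However, the step you yourself call the crux is wrong as written, in two ways. First, Lemma~\ref{lem:scalarization}(i) gives $\hat z^\ast\in\Z_+^+\setminus\cb{0}$ with $\varphi_{R,\hat z^\ast}$ proper, but properness of a convex function that is \emph{not} lower semicontinuous --- and these scalarizations are precisely the functions you cannot assume lower semicontinuous --- does not imply its conjugate is finite anywhere (a discontinuous linear functional is proper and convex, yet its conjugate is identically $+\infty$). So this $\hat z^\ast$ need not supply any non-vertical closed halfspace containing $\gr(R)$, and the perturbation has nothing to lean on. The correct source of the auxiliary halfspace is separation itself: pick $\bar x\in\dom(R)$ and $\bar z\notin R(\bar x)$ (possible since $R$ is proper) and separate $(\bar x,\bar z)$ from $\gr(R)$; the resulting hyperplane, with normal $(\hat x^\ast,-\hat z^\ast)$ and bound $\hat c$, cannot be vertical because $\bar x\in\dom(R)$, and then $\varphi^\ast_{R,\hat z^\ast}(\hat x^\ast)\leq\hat c<+\infty$ together with $\dom(R)\neq\emptyset$ makes $\varphi^{\ast\ast}_{R,\hat z^\ast}$ proper, i.e.\ $\hat z^\ast\in\mathcal{K}$. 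Second, your perturbed normal $(x^\ast,-\varepsilon\hat z^\ast)$ keeps the original $x^\ast$: the functional $(x',z')\mapsto\ip{x^\ast,x'}-\varepsilon\ip{\hat z^\ast,z'}$ has supremum $\varepsilon\,\varphi^\ast_{R,\hat z^\ast}(x^\ast/\varepsilon)$ over $\gr(R)$, which nothing you have assumed controls, so ``the perturbed closed halfspace'' containing $\gr(R)$ may simply not exist. You must mix in the $\X^\ast$-component as well: with the supporting data above, the normal $(x^\ast+\varepsilon\hat x^\ast,-\varepsilon\hat z^\ast)$ bounds $\gr(R)$ by $c+\varepsilon\hat c$ and, since $\ip{x^\ast,x_0}>c$, still excludes $(x_0,z_0)$ for all sufficiently small $\varepsilon>0$; conicity of the set of directions with proper biconjugate scalarization keeps $\varepsilon\hat z^\ast$ in $\mathcal{K}$. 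Finally, even in the non-vertical case of the separation you need the (one-line, but necessary) check that the separating $z^\ast\neq 0$ lies in $\mathcal{K}$ --- again from $\varphi^\ast_{R,z^\ast}(x^\ast)<+\infty$ and $\dom(R)\neq\emptyset$ --- since $\gr(R^{\ast\ast})$ is cut out only by halfspaces indexed by $\mathcal{K}$, not by all of $\Z_+^+\setminus\cb{0}$.
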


\section{Set-valued compositions}\label{sec:comp}

In this section, we consider the composition of two set-valued functions. To that end, let us fix three Hausdorff locally convex topological real linear spaces $\X,\Y,\Z$ with duals $\X^\ast,\Y^\ast,\Z^\ast$, respectively. We assume that $\Y, \Z$ are preordered linear spaces with upper semicontinuous preorders characterized by cones $\Y_+, \Z_+$ and their positive dual cones $\Y_+^+,\Z_+^+$, respectively.

Let $F\colon\Y\to\G_+(\Z)$ and $G\colon\X\to\P_+(\Y)$ be two set-valued functions. We define their composition $F\circ G\colon\X\to\G_+(\Z)$ by
\[
F\circ G(x)\coloneqq \inf_{\G_+(\Z)}\cb{F(y)\mid y\in G(x)}=\cl\co\of{\bigcup_{y\in G(x)}F(y)},\quad x\in\X.
\]

\begin{remark}\label{rem:comp}
	One can also consider the simpler forms $x\mapsto \bigcup_{y\in G(x)}F(y)\in\P_+(\Z)$ and $x\mapsto \cl(\bigcup_{y\in G(x)}F(y)) \in \F_+(\Z)$. We note that all three alternatives have the same conjugate and biconjugate functions. Since our focus will be on conjugation and duality, we prefer working with the current definition for which the composition takes values in $\G_+(\Z)$; see Remark~\ref{rem:SVconjugate}(i).
\end{remark}

\begin{proposition}\label{prop:propcomp}
	\begin{enumerate}[(i)]
		\item Suppose that $\X$ is also a preordered linear space. If $G$ is decreasing, then $F\circ G$ is decreasing.
		\item If $F$ and $G$ are convex, then $F\circ G$ is convex.
	\end{enumerate}
\end{proposition}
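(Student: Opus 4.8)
The plan is to argue both parts directly from the defining formula $F\circ G(x)=\cl\co\of{\bigcup_{y\in G(x)}F(y)}$, exploiting the monotonicity of the $\cl\co$ operation together with elementary Minkowski-sum identities. Part (i) is immediate: if $G$ is decreasing and $x^1\leq x^2$, then $G(x^1)\subseteq G(x^2)$, hence $\bigcup_{y\in G(x^1)}F(y)\subseteq\bigcup_{y\in G(x^2)}F(y)$, and since $S\mapsto\cl\co(S)$ is inclusion-preserving, taking closed convex hulls yields $F\circ G(x^1)\subseteq F\circ G(x^2)$, which is exactly the assertion that $F\circ G$ is decreasing.

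For part (ii), fix $x^1,x^2\in\X$ and $\lambda\in(0,1)$ (the endpoints being trivial), and write $x^\lambda\coloneqq\lambda x^1+(1-\lambda)x^2$, $A\coloneqq\bigcup_{y\in G(x^1)}F(y)$, and $B\coloneqq\bigcup_{y\in G(x^2)}F(y)$. First I would use convexity of $G$ to get $G(x^\lambda)\supseteq\lambda G(x^1)+(1-\lambda)G(x^2)$, so that any $y=\lambda y^1+(1-\lambda)y^2$ with $y^i\in G(x^i)$ lies in $G(x^\lambda)$. Next, convexity of $F$ gives $\lambda F(y^1)+(1-\lambda)F(y^2)\subseteq F(\lambda y^1+(1-\lambda)y^2)\subseteq\bigcup_{y\in G(x^\lambda)}F(y)$. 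Taking the union over $y^1\in G(x^1)$ and $y^2\in G(x^2)$, and using the distributivity of Minkowski sums over unions, namely $\of{\bigcup_i C_i}+\of{\bigcup_j D_j}=\bigcup_{i,j}(C_i+D_j)$, the left-hand sides assemble into $\lambda A+(1-\lambda)B$, whence $\lambda A+(1-\lambda)B\subseteq\bigcup_{y\in G(x^\lambda)}F(y)$. Applying the (monotone) operation $\cl\co$ then gives $\cl\co(\lambda A+(1-\lambda)B)\subseteq F\circ G(x^\lambda)$.

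It remains to pass the scalars and the closed convex hull through the Minkowski sum, which is the only delicate point. Here I would invoke the identity $\co(C+D)=\co(C)+\co(D)$ together with the inclusion $\cl(C)+\cl(D)\subseteq\cl(C+D)$, valid in any topological linear space by continuity of vector addition; combining them yields $\cl\co(C)+\cl\co(D)\subseteq\cl\co(C+D)$. Applying this with $C=\lambda A$, $D=(1-\lambda)B$ and using $\cl\co(\lambda A)=\lambda\cl\co(A)$, I obtain
\[
\lambda(F\circ G)(x^1)+(1-\lambda)(F\circ G)(x^2)=\lambda\cl\co(A)+(1-\lambda)\cl\co(B)\subseteq\cl\co(\lambda A+(1-\lambda)B)\subseteq F\circ G(x^\lambda),
\]
which is precisely the convexity inclusion for $F\circ G$. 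The empty-value cases $G(x^i)=\emptyset$ are handled by the conventions $\lambda\emptyset=\emptyset$ and $\emptyset+B=\emptyset$, making the inclusion trivial. I expect the closure/Minkowski interaction in this last step to be the main obstacle: in general one has only $\cl(C)+\cl(D)\subseteq\cl(C+D)$ (equality can fail without a compactness hypothesis), so it is fortunate that the inclusion happens to point in the direction we need.

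As a slicker alternative for part (ii), one could argue through scalarizations: since the support function is insensitive to $\cl\co$, one checks $\varphi_{F\circ G,z^\ast}(x)=\inf_{y\in G(x)}\varphi_{F,z^\ast}(y)$. By Lemma~\ref{lem:convexity} the graph $\gr G$ is convex and by Lemma~\ref{lem:scalarization} each $\varphi_{F,z^\ast}$ is convex, so $\varphi_{F\circ G,z^\ast}(x)=\inf_{y}\of{\varphi_{F,z^\ast}(y)+I_{\gr G}(x,y)}$ is the marginal of a jointly convex function and hence convex; Lemma~\ref{lem:scalarization} then delivers convexity of $F\circ G$. This route avoids the closure subtlety altogether, at the cost of relying on more of the machinery already assembled.
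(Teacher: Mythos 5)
Your main argument is correct and is essentially the paper's own proof: part (i) is the same monotonicity observation, and part (ii) combines the elementwise inclusions coming from convexity of $G$ and $F$ with Minkowski-sum identities and monotonicity of $\cl\co$, exactly as the paper does. One remark: your caution at the closure step is justified, and your version is in fact slightly more precise than the paper's, which writes $\lambda\cl\co(A)+(1-\lambda)\cl\co(B)=\cl\of{\lambda\co(A)+(1-\lambda)\co(B)}$ as an equality; in general the left-hand side need not be closed (sums of closed convex sets can fail to be closed), so only the inclusion $\subseteq$ holds --- but, as you note, that is the direction needed, so both proofs go through. Your alternative scalarization route is also sound and genuinely different: it replaces the closure bookkeeping by Lemmata~\ref{lem:convexity} and~\ref{lem:scalarization} together with the identity $\varphi_{F\circ G,z^\ast}(x)=\inf_{y\in G(x)}\varphi_{F,z^\ast}(y)$, which the paper only establishes afterwards as Proposition~\ref{prop:comp-sc} (whose proof does not rely on Proposition~\ref{prop:propcomp}, so there is no circularity); what the direct argument buys is self-containedness and no appeal to support-function machinery, while the scalarization argument buys immunity to the closure subtlety and previews the technique on which the rest of the paper (conjugation via scalarizations) is built.
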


\begin{proof}
	\begin{enumerate}[(i)]
		\item Let $x^1,x^2\in\X$ with $x^1\leq x^2$. Let $z\in F(y)$ for some $y\in G(x^1)$. Since $G$ is decreasing, we have $y\in G(x^1)\subseteq G(x^2)$. Then, $z\in F\circ G(x^2)$. It follows that $F\circ G(x^1)\subseteq F\circ G(x^2)$.
		\item Let $x^1,x^2\in\X$ and $\lambda\in(0,1)$. Let $z^1\in F(y^1)$ and $z^2\in F(y^2)$ for some $y^1\in G(x^1)$ and $y^2\in G(x^2)$. Since $G$ is convex, we have
		\[
		\lambda y^1+(1-\lambda)y^2\in \lambda G(x^1)+(1-\lambda)G(x^2)\subseteq G(\lambda x^1+(1-\lambda)x^2).
		\]
		Moreover, since $F$ is convex, we have
		\[
		\lambda z^1+(1-\lambda)z^2\in \lambda F(y^1)+(1-\lambda)F(y^2)\subseteq F(\lambda y^1+(1-\lambda)y^2).
		\]
		Hence,
		\[
		\lambda z^1+(1-\lambda)z^2\in \bigcup_{y\in G(\lambda x^1+(1-\lambda)x^2)}F(y).
		\]
		Then, by elementary properties of Minkowski sums, we obtain
		\begin{align*}
			&\lambda F\circ G(x^1)+(1-\lambda) F\circ G(x^2)\\
			&=
			\lambda \cl\co\of{\bigcup_{y^1\in G(x^1)}F(y^1)}+(1-\lambda) \cl\co\of{\bigcup_{y^2\in G(x^2)}F(y^2)}\\
			&=\cl\of{\lambda \co\of{\bigcup_{y^1\in G(x^1)}F(y^1)}+(1-\lambda) \co\of{\bigcup_{y^2\in G(x^2)}F(y^2)}}\\
			&=\cl\co\of{\lambda \bigcup_{y^1\in G(x^1)}F(y^1)+(1-\lambda)\bigcup_{y^2\in G(x^2)}F(y^2)}\\
			&\subseteq \cl\co\of{\bigcup_{y\in G(\lambda x^1+(1-\lambda)x^2)}F(y)}\\
			&= F\circ G(\lambda x^1+(1-\lambda)x^2),
		\end{align*}
		which completes the proof.
	\end{enumerate}
\end{proof}

\begin{proposition}\label{prop:comp-sc}
	Let $z^\ast\in\Z^+_+\setminus\{0\}$ and $x\in\X$. Then, we have
	\[
	\varphi_{F\circ G,z^\ast}(x)=\inf_{y\in G(x)}\varphi_{F,z^\ast}(y).
	\]
\end{proposition}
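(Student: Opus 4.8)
The plan is to recast both sides of the claimed identity as infima of the continuous linear functional $\ip{z^\ast,\cdot}$ over suitable subsets of $\Z$, and then to reduce the proposition to the invariance of such infima under the operations of taking the convex hull and the closure. Throughout, write $A\coloneqq\bigcup_{y\in G(x)}F(y)\subseteq\Z$.

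First I would unravel the right-hand side. By the definition \eqref{eq:sc} of the scalarization together with the definition of the support function, interchanging the two infima yields
\[
\inf_{y\in G(x)}\varphi_{F,z^\ast}(y)=\inf_{y\in G(x)}\inf_{z\in F(y)}\ip{z^\ast,z}=\inf_{z\in A}\ip{z^\ast,z}=\sigma_{A}(z^\ast).
\]
On the other hand, using \eqref{eq:sc} again and the definition of $F\circ G$,
\[
\varphi_{F\circ G,z^\ast}(x)=\sigma_{F\circ G(x)}(z^\ast)=\sigma_{\cl\co(A)}(z^\ast).
\]
Hence the proposition is equivalent to the single support-function identity $\sigma_{\cl\co(A)}(z^\ast)=\sigma_{A}(z^\ast)$; that is, neither passing to the convex hull nor passing to the closure changes the value of the functional's infimum.

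The inequality $\sigma_{\cl\co(A)}(z^\ast)\leq\sigma_{A}(z^\ast)$ is immediate from $A\subseteq\cl\co(A)$, since enlarging the set can only decrease the infimum of a fixed functional. The reverse inequality is the crux. If $\sigma_{A}(z^\ast)=+\infty$, then $A=\emptyset$ (which occurs exactly when $G(x)=\emptyset$ or every $F(y)$ with $y\in G(x)$ is empty), so $\cl\co(A)=\emptyset$ and both sides equal $+\infty$; if $\sigma_{A}(z^\ast)=-\infty$, the inequality holds trivially by the same inclusion. In the remaining case $c\coloneqq\sigma_{A}(z^\ast)\in\R$, the key observation is that the halfspace $H(z^\ast,c)=\{z\in\Z\mid\ip{z^\ast,z}\geq c\}$ is a closed convex set that contains $A$ by the very definition of $c$. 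Since $\cl\co(A)$ is the smallest closed convex set containing $A$, it follows that $\cl\co(A)\subseteq H(z^\ast,c)$, whence $\sigma_{\cl\co(A)}(z^\ast)\geq c=\sigma_{A}(z^\ast)$, as desired.

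The argument is essentially routine, and the one place demanding care is this final containment: it is precisely the continuity of $z^\ast\in\Z^\ast$ that makes $H(z^\ast,c)$ closed and thereby forces $\cl\co(A)$---and not merely $\co(A)$---to remain inside it. I would also note that the $\Z_+$-monotonicity of the values of $F$ plays no role in the computation beyond ensuring that $\varphi_{F,z^\ast}$ is indeed the scalarization under consideration; the whole proof rests on the elementary insensitivity of infima of continuous linear functionals to passing to closed convex hulls.
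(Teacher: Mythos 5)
Your proof is correct and takes essentially the same route as the paper: both sides are reduced, via the interchange of infima, to the statement that $A\coloneqq\bigcup_{y\in G(x)}F(y)$ and $\cl\co(A)$ have the same support function at $z^\ast$. The only difference is that the paper cites this invariance as a standard fact in one line, whereas you prove it from scratch with the halfspace argument (which is a correct and complete justification of exactly that fact).
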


\begin{proof}
	Since the support function of a set is the same as that of its closed convex hull, we have
	\[
	\varphi_{F\circ G,z^\ast}(x)=\inf_{z\in F\circ G(x)}\ip{z^\ast,z}=\inf_{z\in \bigcup_{y\in G(x)}F(y)}\ip{z^\ast,z}=\inf_{y\in G(x)}\inf_{z\in F(y)}\ip{z^\ast,z}=\inf_{y\in G(x)}\varphi_{F,z^\ast}(y).
	\]
	Hence, the result follows.
\end{proof}

Based on Proposition~\ref{prop:comp-sc}, we can make some simple observations about the properness of scalarizations, as the next corollary states.

\begin{corollary}\label{cor:comp-sc}
	Let $z^\ast\in\Z^+_+\setminus\{0\}$ be such that $\varphi_{F\circ G,z^\ast}$ is proper. Then, $\varphi_{F,z^\ast}$ is proper.
\end{corollary}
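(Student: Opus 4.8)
The plan is to translate the properness of $\varphi_{F\circ G,z^\ast}$ through the scalarization formula of Proposition~\ref{prop:comp-sc}, which gives $\varphi_{F\circ G,z^\ast}(x)=\inf_{y\in G(x)}\varphi_{F,z^\ast}(y)$ for every $x\in\X$. Recall that properness of an extended real-valued function amounts to two separate requirements: its effective domain is nonempty, and it never takes the value $-\infty$. I would verify each of these for $\varphi_{F,z^\ast}$ in turn, drawing on the corresponding requirement for $\varphi_{F\circ G,z^\ast}$.

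For the nonemptiness of $\dom(\varphi_{F,z^\ast})$, I would use that, since $\varphi_{F\circ G,z^\ast}$ is proper, there exists $x_0\in\X$ with $\varphi_{F\circ G,z^\ast}(x_0)<+\infty$. By the convention that the infimum over the empty set equals $+\infty$, this already forces $G(x_0)\neq\emptyset$; and since an infimum of extended reals is $<+\infty$ only if at least one of the terms is $<+\infty$, there must exist $y_0\in G(x_0)$ with $\varphi_{F,z^\ast}(y_0)<+\infty$. Hence $y_0\in\dom(\varphi_{F,z^\ast})$, so the effective domain is nonempty.

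For the condition that $\varphi_{F,z^\ast}$ never equals $-\infty$, the basic tool is the pointwise bound $\varphi_{F,z^\ast}(y)\geq \inf_{y'\in G(x)}\varphi_{F,z^\ast}(y')=\varphi_{F\circ G,z^\ast}(x)$, valid whenever $y\in G(x)$; combined with $\varphi_{F\circ G,z^\ast}(x)>-\infty$ from properness, this immediately yields $\varphi_{F,z^\ast}(y)>-\infty$ for every $y$ lying in $G(x)$ for some $x$. Equivalently, arguing by contradiction, if $\varphi_{F,z^\ast}(\bar y)=-\infty$ for some $\bar y\in G(\bar x)$, then $\varphi_{F\circ G,z^\ast}(\bar x)\leq \varphi_{F,z^\ast}(\bar y)=-\infty$, contradicting properness of $\varphi_{F\circ G,z^\ast}$. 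The step I expect to be the main obstacle is upgrading this to \emph{all} $\bar y\in\Y$: a priori nothing in the composition constrains $\varphi_{F,z^\ast}(\bar y)$ at points $\bar y$ outside the range $\bigcup_{x\in\X}G(x)$, since such $\bar y$ never enter the infimum defining $F\circ G$. Closing this gap is the crux, and I would attack it by tying any point at which $\varphi_{F,z^\ast}$ could drop to $-\infty$ back to the range of $G$ and its interaction with the values of $F$, so that a contradiction with the properness of $\varphi_{F\circ G,z^\ast}$ can again be produced.
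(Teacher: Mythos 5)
Your first two steps reproduce, almost verbatim, the entirety of the paper's own proof: from Proposition~\ref{prop:comp-sc} the paper deduces that $\varphi_{F,z^\ast}(y)>-\infty$ for every $(x,y)\in\gr(G)$ and that $\varphi_{F,z^\ast}(y^0)<+\infty$ for some $(x^0,y^0)\in\gr(G)$, and then simply asserts properness. So the obstacle you single out --- that nothing in the composition constrains $\varphi_{F,z^\ast}$ at points outside $\bigcup_{x\in\X}G(x)$ --- is left unaddressed in the printed argument as well; your diagnosis is sharper than the paper's proof. However, your proposal stops at announcing a strategy (``tying any such point back to the range of $G$''), and no such argument can exist at the level of generality at which the corollary is stated, because the statement is then false. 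Take $\Y=\Z=\R$, $\Y_+=\{0\}$, $\Z_+=[0,+\infty)$, $z^\ast=1$, $G\equiv\{0\}$, and $F(y)=\emptyset$ for $y<0$, $F(0)=[0,+\infty)$, $F(y)=\Z$ for $y>0$; then $F$ maps into $\G_+(\Z)$ and has convex graph, $G$ maps into $\P_+(\Y)$, and $F\circ G\equiv[0,+\infty)$, so $\varphi_{F\circ G,z^\ast}\equiv 0$ is proper, yet $\varphi_{F,z^\ast}(y)=-\infty$ for every $y>0$. So your missing step is not a technicality: some hypothesis linking the values of $F$ off the range of $G$ to its values on that range is indispensable.

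What actually closes the gap, in every place the corollary is invoked (the proofs of Theorem~\ref{thm:conj} and Corollary~\ref{cor:biconj}), is the standing hypothesis there that $F$ is convex and scalarly lower semicontinuous, so that $\varphi_{F,z^\ast}$ is a convex lower semicontinuous function on $\Y$. Such a function, once it takes the value $-\infty$ anywhere, takes no finite value at all: if $\varphi_{F,z^\ast}(\bar y)=-\infty$ and $y\in\dom(\varphi_{F,z^\ast})$, convexity gives $\varphi_{F,z^\ast}(\lambda\bar y+(1-\lambda)y)=-\infty$ for every $\lambda\in(0,1]$, and letting $\lambda\downarrow 0$ and using lower semicontinuity at $y$ forces $\varphi_{F,z^\ast}(y)=-\infty$. (This is the same dichotomy the paper invokes via \citet[Proposition~2.2.5]{zalinescu} in Case~3 of the proof of Theorem~\ref{thm:conj}.) Combined with your first two steps, which produce a point $y_0$ in the range of $G$ at which $\varphi_{F,z^\ast}$ is finite, this rules out the value $-\infty$ everywhere on $\Y$ and completes the proof. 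Note that in the counterexample above, $\varphi_{F,z^\ast}$ fails to be lower semicontinuous precisely at $y=0$, so this added hypothesis is exactly what your unresolved step requires.
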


\begin{proof}
	By Lemma~\ref{prop:comp-sc}, we must have $\varphi_{F,z^\ast}(y)>-\infty$ for every $(x,y)\in\gr(G)$ and there exists $(x^0,y^0)\in \gr G$ such that $\varphi_{F,z^\ast}(y^0)<+\infty$. This implies that $\varphi_{F,z^\ast}$ is proper.
\end{proof}

Our aim is to provide a formula for the set-valued conjugate of $F\circ G$. In view of Remark~\ref{rem:SVconjugate}(ii), it is sufficient to calculate the conjugates of the scalarizations of $F\circ G$; see \eqref{eq:sc}. Our calculation will follow a minimax argument that makes use of a compactness assumption and an unboundedness/monotonicity assumption, which we introduce next.

\begin{assumption}\label{asmp:conegen}
	The cone $\Y^+_+$ has a convex and weak*-compact cone generator, that is, there exists a convex and $\sigma(\Y^\ast,\Y)$-compact set $\mathcal{B}_{\Y^\ast}$ such that every $y^\ast\in\Y^+_+\setminus\{0\}$ can be written as $y^\ast=\lambda \bar{y}^\ast$ for some $\lambda>0$ and $\bar{y}^\ast\in \mathcal{B}_{Y^\ast}$.
\end{assumption}

\begin{assumption}\label{asmp:sc}
	One of the following conditions holds:
	\begin{enumerate}[(a)]
		\item For each $y^\ast\in\mathcal{B}_{\Y^\ast}\setminus\{0\}$, we have $\inf_{x\in\X}\varphi_{G,y^\ast}(x)=-\infty$.
		\item $\X$ is a preordered linear space with upper semicontinuous preoder with cone $\X_+$ and its positive dual cone $\X_+^+$. The cone $\X^{\sharp}_+\coloneqq \{x\in\X\mid\forall x^\ast\in\X_+^+\setminus\{0\}\colon \ip{x^\ast,x}>0\}$ is nonempty. For each $y^\ast\in\mathcal{B}_{\Y^\ast}\setminus\{0\}$, the function $\varphi_{G,y^\ast}$ is strictly decreasing, i.e., for every $x^1,x^2\in\X$, we have
		\[
		x^2\in x^1+\X_+^{\sharp}\quad\Leftrightarrow\quad \varphi_{G,y^\ast}(x^1)>\varphi_{G,y^\ast}(x^2).
		\]
	\end{enumerate}
\end{assumption}

\begin{remark}\label{rem:asmp}
	Assumptions~\ref{asmp:conegen},~\ref{asmp:sc}(b) have also appeared in \citet[Section~4]{mucahitthesis} in the context of scalar quasiconvex compositions.
\end{remark}

We proceed with the main theorem of the paper.

\begin{theorem}\label{thm:conj}
	Suppose that Assumptions~\ref{asmp:conegen},~\ref{asmp:sc} hold. Let $F\colon \Y\to\G_+(\Z)$, $G\colon \X\to\G_+(\Y)$ be convex and scalarly lower semicontinuous set-valued functions. Then, for each $z^\ast\in \Z^\ast_{F\circ G}$, we have
	\[
	\varphi_{F\circ G,z^\ast}^\ast(x^\ast)=\inf_{y^\ast\in\Y^\ast_G}\of{\varphi^\ast_{G,y^\ast}(x^\ast)+\varphi^\ast_{F,z^\ast}(y^\ast)};
	\]
	in particular,
	\[
	-(F\circ G)^\ast(x^\ast,z^\ast)=\bigcap_{y^\ast\in\Y^\ast_G}\cb{z\in\Z\mid \ip{z^\ast,z}\geq -\varphi^\ast_{G,y^\ast}(x^\ast)-\varphi^\ast_{F,z^\ast}(y^\ast)}
	\]
	for each $x^\ast\in\X^\ast$. (Here, $\Y^\ast_G\coloneqq\{y^\ast\in\Y^\ast\setminus\{0\}\mid\varphi_{G,y^\ast}\text{ is proper}\}$.)
\end{theorem}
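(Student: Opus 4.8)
The plan is to reduce the computation of $\varphi^\ast_{F\circ G,z^\ast}$ to a constrained scalar optimization and then dualize it by a minimax argument. First I would use Proposition~\ref{prop:comp-sc} to write, for $z^\ast\in\Z^\ast_{F\circ G}$,
\[
\varphi^\ast_{F\circ G,z^\ast}(x^\ast)=\sup_{x\in\X}\of{\ip{x^\ast,x}-\inf_{y\in G(x)}\varphi_{F,z^\ast}(y)}=\sup_{(x,y)\in\gr G}\of{\ip{x^\ast,x}-\varphi_{F,z^\ast}(y)}.
\]
The constraint $y\in G(x)$ is the crucial object: since $G(x)\in\G_+(\Y)$, the separation formula \eqref{separation} says that $y\in G(x)$ if and only if $\ip{y^\ast,y}\geq\varphi_{G,y^\ast}(x)$ for every $y^\ast\in\Y^+_+\setminus\{0\}$, and by Assumption~\ref{asmp:conegen} it suffices to test the generators $y^\ast\in\mathcal{B}_{\Y^\ast}$. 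This suggests the Lagrangian
\[
\Psi(y^\ast;x,y)\coloneqq \ip{x^\ast,x}-\varphi_{F,z^\ast}(y)+\ip{y^\ast,y}-\varphi_{G,y^\ast}(x),
\]
for which a direct check, using the positive homogeneity of $\varphi_{G,\cdot}$ in the dual variable, gives $\inf_{y^\ast\in\Y^+_+\setminus\{0\}}\Psi(y^\ast;x,y)$ equal to $\ip{x^\ast,x}-\varphi_{F,z^\ast}(y)$ when $(x,y)\in\gr G$ and to $-\infty$ otherwise, whereas $\sup_{(x,y)}\Psi(y^\ast;x,y)=\varphi^\ast_{G,y^\ast}(x^\ast)+\varphi^\ast_{F,z^\ast}(y^\ast)$. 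Thus the asserted identity is precisely the minimax equality $\sup_{x,y}\inf_{y^\ast}\Psi=\inf_{y^\ast}\sup_{x,y}\Psi$.

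The inequality $\leq$ (weak duality) is elementary and I would dispose of it first: for any fixed $y^\ast\in\Y^\ast_G\subseteq\Y^+_+\setminus\{0\}$ and any $(x,y)\in\gr G$ one has $\varphi_{G,y^\ast}(x)\leq\ip{y^\ast,y}$, so $\ip{x^\ast,x}-\varphi_{F,z^\ast}(y)\leq\of{\ip{x^\ast,x}-\varphi_{G,y^\ast}(x)}+\of{\ip{y^\ast,y}-\varphi_{F,z^\ast}(y)}$; taking the supremum over $(x,y)\in\gr G$ (on the right the two terms decouple) and then the infimum over $y^\ast\in\Y^\ast_G$ yields $\varphi^\ast_{F\circ G,z^\ast}(x^\ast)\leq\inf_{y^\ast\in\Y^\ast_G}\of{\varphi^\ast_{G,y^\ast}(x^\ast)+\varphi^\ast_{F,z^\ast}(y^\ast)}$.

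For the reverse inequality I would invoke Liu's minimax inequality with $(x,y)$ as the maximization variable and $y^\ast$, written as $\mu\bar y^\ast$ with $\mu\geq0$ and $\bar y^\ast\in\mathcal{B}_{\Y^\ast}$, as the minimization variable. The structural hypotheses are in place: $\Psi$ is concave in $(x,y)$ because $\varphi_{F,z^\ast}$ and $\varphi_{G,y^\ast}$ are convex by Lemma~\ref{lem:scalarization}; it is convex and weak$^\ast$-lower semicontinuous in $y^\ast$ since $-\varphi_{G,y^\ast}(x)=\sup_{y'\in G(x)}\ip{y^\ast,-y'}$ is a supremum of weak$^\ast$-continuous affine functions; and it is upper semicontinuous in $(x,y)$ precisely because $F,G$ are scalarly lower semicontinuous, so that $\varphi_{F,z^\ast}$ and $\varphi_{G,y^\ast}$ are lower semicontinuous. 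Assumption~\ref{asmp:conegen} supplies the weak$^\ast$-compactness of $\mathcal{B}_{\Y^\ast}$ that drives the minimax, while Assumption~\ref{asmp:sc} provides the coercivity/monotonicity needed to control the unbounded multiplier as $\mu\to\infty$; this non-compactness in $\mu$ is exactly the feature that forces the use of Liu's inequality in place of Sion's equality, and I expect it to be the main obstacle.

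The last and most delicate step is the bookkeeping that replaces $\Y^+_+\setminus\{0\}$ by $\Y^\ast_G$ in the infimum. Here Corollary~\ref{cor:comp-sc} guarantees that $\varphi_{F,z^\ast}$ is proper, hence $\varphi^\ast_{F,z^\ast}(y^\ast)>-\infty$ for every $y^\ast$; I would then argue that multipliers $y^\ast\notin\Y^\ast_G$ either make $\varphi^\ast_{G,y^\ast}(x^\ast)=+\infty$, when $\varphi_{G,y^\ast}$ attains $-\infty$, and so cannot lower the infimum, or are ruled out by the Lagrange-duality step that, under Assumption~\ref{asmp:sc}, keeps the optimal multiplier within the set of proper scalarizations. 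Finally, the ``in particular'' set-valued identity follows at once by substituting the scalar conjugate formula into the translation $-(F\circ G)^\ast(x^\ast,z^\ast)=\cb{z\in\Z\mid\ip{z^\ast,z}\geq-\varphi^\ast_{F\circ G,z^\ast}(x^\ast)}$ from Remark~\ref{rem:SVconjugate}(ii) and distributing the resulting intersection over $y^\ast\in\Y^\ast_G$.
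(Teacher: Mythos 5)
Your reduction of $\varphi^\ast_{F\circ G,z^\ast}(x^\ast)$ to $\sup_{(x,y)\in\gr G}\of{\ip{x^\ast,x}-\varphi_{F,z^\ast}(y)}$, the identification of the target identity as a minimax equality for your Lagrangian $\Psi$, and the weak-duality half ($\leq$) are all correct, and they match the opening moves of the paper's proof. The genuine gap is the reverse inequality, which is the heart of the theorem. Because you penalize the constraint $y\in G(x)$ linearly, your minimization variable ranges over the whole cone $\Y_+^+\setminus\{0\}$ --- equivalently over $\mu\bar y^\ast$ with $\mu\geq 0$, $\bar y^\ast\in\mathcal{B}_{\Y^\ast}$ --- and this set is not weak*-compact; the maximization domain $\X\times\Y$ is not compact either. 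Liu's minimax inequality (Theorem~\ref{thm:liu}), exactly like Sion's theorem, requires one of the two sets to be compact, so it does not apply to your formulation at all. Your remark that Assumption~\ref{asmp:sc} ``provides the coercivity/monotonicity needed to control the unbounded multiplier'' names the obstacle but supplies no argument for it; that control is precisely the missing proof. Moreover, your rationale for invoking Liu rests on a misreading of the tool: Liu's result does not relax compactness (it still needs a compact set); it relaxes the semicontinuity hypotheses by allowing a pair of functions $\tilde g\leq g$. In fact your $\Psi$ is concave and upper semicontinuous in $(x,y)$ and convex and weak*-lower semicontinuous in $y^\ast$, so Sion's theorem would already yield the equality if your dual domain were compact; compactness, not a semicontinuity defect, is what blocks your route, and Liu cannot repair it.

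The paper circumvents this with a different decomposition, in two stages. It keeps the constraint hard, setting $f(x,y,y^\ast)=\ip{x^\ast,x}-\varphi_{F,z^\ast}(y)-I_{A(y^\ast)}(x,y)$ with $A(y^\ast)=\cb{(x,y)\mid \ip{y^\ast,y}\geq\varphi_{G,y^\ast}(x)}$, so that the dual variable never leaves the compact base $\mathcal{B}_{\Y^\ast}$ while still $\sup_{x,y}\inf_{y^\ast\in\mathcal{B}_{\Y^\ast}}f=\varphi^\ast_{F\circ G,z^\ast}(x^\ast)$. The price is that $f$ is upper semicontinuous and concave in $(x,y)$ but not lower semicontinuous in $y^\ast$, and this is exactly what the two-function feature of Liu's inequality is for: the companion $\tilde f$, built from the strict-inequality set $\tilde A(y^\ast)$, is quasiconvex and weak*-lower semicontinuous in $y^\ast$ but not upper semicontinuous in $(x,y)$ (Lemma~\ref{lem:proof1}), and Assumption~\ref{asmp:sc} enters through Lemma~\ref{lem:proof2} to guarantee $\sup_{x,y}f=\sup_{x,y}\tilde f$, which converts Liu's inequality into the desired equality. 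Only after this swap is your multiplier introduced: for each fixed $y^\ast\in\mathcal{B}_{\Y^\ast}$, the inner problem $h(y^\ast)$ is dualized by scalar Lagrange duality, with Slater's condition verified via Assumption~\ref{asmp:sc}, positive homogeneity then absorbs $\lambda y^\ast$ back into the cone, and a case analysis on the properness of $\varphi_{G,y^\ast}$ justifies restricting the infimum to $\Y^\ast_G$. That final bookkeeping is also loose in your outline: you never exclude the case $\varphi_{G,y^\ast}\equiv+\infty$, in which $\varphi^\ast_{G,y^\ast}(x^\ast)=-\infty$ and the infimum could collapse; the paper rules it out because it would force $h(y^\ast)=-\infty$, contradicting properness. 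To complete your argument you would essentially have to reproduce this two-stage structure.
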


The proof of Theorem~\ref{thm:conj} will be given in Section~\ref{sec:proof}.

When the composition is guaranteed to be a proper scalarly lower semicontinuous set-valued function, we obtain a dual representation for it as a corollary of Theorem~\ref{thm:conj}.

\begin{corollary}\label{cor:biconj}
	Suppose that Assumptions~\ref{asmp:conegen},~\ref{asmp:sc} hold. Let $F\colon \Y\to\G_+(\Z)$, $G\colon \X\to\G_+(\Y)$ be convex and scalarly lower semicontinuous set-valued functions. Then,
	\[
	(F\circ G)^{\ast\ast}(x)=\bigcap_{\substack{x^\ast\in\X^\ast,\\
			y^\ast\in\Y^\ast_G,\\
			z^\ast\in\Z^\ast_F}}\cb{z\in\Z\mid \ip{z^\ast,z}\geq \ip{x^\ast,x}-\varphi^\ast_{G,y^\ast}(x^\ast)-\varphi^\ast_{F,z^\ast}(y^\ast)}
	\]
	for each $x\in\X$. Moreover, if $F\circ G$ is a proper scalarly closed set-valued function, then
	\[
	F\circ G(x)=\bigcap_{\substack{x^\ast\in\X^\ast,\\
			y^\ast\in\Y^\ast_G,\\
			z^\ast\in\Z^\ast_F}}\cb{z\in\Z\mid \ip{z^\ast,z}\geq \ip{x^\ast,x}-\varphi^\ast_{G,y^\ast}(x^\ast)-\varphi^\ast_{F,z^\ast}(y^\ast)}
	\]
	for each $x\in\X$.
\end{corollary}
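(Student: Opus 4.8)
The plan is to prove the first (biconjugate) identity by feeding the conjugation formula of Theorem~\ref{thm:conj} into the scalarized description of the set-valued biconjugate from Remark~\ref{rem:SVconjugate}(ii), and to derive the second identity from the set-valued Fenchel--Moreau theorem. For the biconjugate I would start from
\[
(F\circ G)^{\ast\ast}(x)=\bigcap_{z^\ast\in\mathcal{K}}\cb{z\in\Z\mid\ip{z^\ast,z}\geq\varphi_{F\circ G,z^\ast}^{\ast\ast}(x)},
\]
which is valid for any $\mathcal{K}\supseteq\cb{z^\ast\in\Z^\ast\setminus\cb{0}\mid\varphi_{F\circ G,z^\ast}^{\ast\ast}\text{ proper}}$, and take $\mathcal{K}=\Z^\ast_F$. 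This choice is admissible because properness of $\varphi_{F\circ G,z^\ast}^{\ast\ast}$ descends first to $\varphi_{F\circ G,z^\ast}^\ast$ and then to $\varphi_{F\circ G,z^\ast}$, giving $\cb{z^\ast\mid\varphi_{F\circ G,z^\ast}^{\ast\ast}\text{ proper}}\subseteq\Z^\ast_{F\circ G}$, and $\Z^\ast_{F\circ G}\subseteq\Z^\ast_F$ by Corollary~\ref{cor:comp-sc}. For $z^\ast\in\Z^\ast_{F\circ G}$, substituting Theorem~\ref{thm:conj} into the definition of the scalar biconjugate and turning $-\inf$ into $\sup$ yields
\[
\varphi_{F\circ G,z^\ast}^{\ast\ast}(x)=\sup_{\substack{x^\ast\in\X^\ast,\\ y^\ast\in\Y^\ast_G}}\of{\ip{x^\ast,x}-\varphi_{G,y^\ast}^\ast(x^\ast)-\varphi_{F,z^\ast}^\ast(y^\ast)},
\]
so that, since the sublevel halfspace at a supremum level is the intersection of the individual halfspaces, the $z^\ast$-slice of $(F\circ G)^{\ast\ast}(x)$ coincides with the $z^\ast$-slice of the claimed formula.

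The main obstacle is that Theorem~\ref{thm:conj} only yields this equality on $\Z^\ast_{F\circ G}$, whereas the intersection in the statement runs over the possibly larger set $\Z^\ast_F$; I must show the slices for $z^\ast\in\Z^\ast_F\setminus\Z^\ast_{F\circ G}$ cause no discrepancy. The key auxiliary fact is a \emph{weak-duality inequality} valid for every $z^\ast\in\Z_+^+\setminus\cb{0}$ with no properness required: writing $\varphi_{F\circ G,z^\ast}^\ast(x^\ast)=\sup_{(x,y)\in\gr G}\of{\ip{x^\ast,x}-\varphi_{F,z^\ast}(y)}$ via Proposition~\ref{prop:comp-sc} and combining the Fenchel--Young inequalities for $\varphi_{G,y^\ast}$ and $\varphi_{F,z^\ast}$ with $\ip{y^\ast,y}\geq\varphi_{G,y^\ast}(x)$ (which holds because $y\in G(x)$) gives $\varphi_{F\circ G,z^\ast}^\ast\leq\inf_{y^\ast\in\Y^\ast_G}\of{\varphi_{G,y^\ast}^\ast+\varphi_{F,z^\ast}^\ast(y^\ast)}$, hence after conjugation $\varphi_{F\circ G,z^\ast}^{\ast\ast}(x)\geq\sup_{x^\ast,y^\ast}\of{\ip{x^\ast,x}-\varphi_{G,y^\ast}^\ast(x^\ast)-\varphi_{F,z^\ast}^\ast(y^\ast)}$. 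This already shows that the claimed formula contains $(F\circ G)^{\ast\ast}(x)$. For the reverse inclusion I would note that, for $z^\ast\in\Z^\ast_F\setminus\Z^\ast_{F\circ G}$, the function $\varphi_{F\circ G,z^\ast}$ is improper: if it attains $-\infty$ then $\varphi_{F\circ G,z^\ast}^\ast\equiv+\infty$, so weak duality forces $\inf_{y^\ast\in\Y^\ast_G}\of{\varphi_{G,y^\ast}^\ast(x^\ast)+\varphi_{F,z^\ast}^\ast(y^\ast)}\equiv+\infty$ and both the true slice and the claimed slice equal $\Z$; the only other possibility, $\varphi_{F\circ G,z^\ast}\equiv+\infty$, means $F\circ G\equiv\emptyset$, a degenerate case in which the composition is improper and which I would dispose of separately. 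In the nondegenerate situation the slices agree for every $z^\ast\in\Z^\ast_F$, completing the first identity.

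For the second identity I would check that $F\circ G$ satisfies the hypotheses of the set-valued Fenchel--Moreau theorem. It is convex by Proposition~\ref{prop:propcomp}(ii) and proper by assumption; scalar closedness means each $\varphi_{F\circ G,z^\ast}$ is lower semicontinuous, i.e. $F\circ G$ is scalarly lower semicontinuous, whence it is lattice-lower semicontinuous by Lemma~\ref{lem:scalarization2} and therefore closed by Lemma~\ref{lem:closedness} (recall that $F\circ G$ takes values in $\G_+(\Z)\subseteq\F_+(\Z)$). Thus $F\circ G$ is a proper closed convex set-valued function, so Theorem~\ref{thm:SVFM} gives $F\circ G=(F\circ G)^{\ast\ast}$, and substituting the biconjugate formula from the first part produces the asserted representation of $F\circ G(x)$.
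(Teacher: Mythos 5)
Your proof is correct and follows the same skeleton as the paper's: justify taking $\mathcal{K}=\Z^\ast_F$ in Remark~\ref{rem:SVconjugate}(ii) via the properness chain ($\varphi^{\ast\ast}_{F\circ G,z^\ast}$ proper $\Rightarrow$ $\varphi^{\ast}_{F\circ G,z^\ast}$ proper $\Rightarrow$ $\varphi_{F\circ G,z^\ast}$ proper) together with Corollary~\ref{cor:comp-sc}, insert the conjugation formula of Theorem~\ref{thm:conj}, and obtain the second identity from Theorem~\ref{thm:SVFM} after checking convexity (Proposition~\ref{prop:propcomp}(ii)) and closedness (Lemmata~\ref{lem:scalarization2},~\ref{lem:closedness}). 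Where you genuinely add something is in the treatment of $z^\ast\in\Z^\ast_F\setminus\Z^\ast_{F\circ G}$: the paper's proof declares the first formula ``a direct consequence of Theorem~\ref{thm:conj}'', even though that theorem equates the scalar conjugates only for $z^\ast\in\Z^\ast_{F\circ G}$, while the intersection in the corollary runs over the possibly larger cone $\Z^\ast_F$. Your Fenchel--Young/weak-duality inequality $\varphi^\ast_{F\circ G,z^\ast}(x^\ast)\leq\varphi^\ast_{G,y^\ast}(x^\ast)+\varphi^\ast_{F,z^\ast}(y^\ast)$, valid for every $y^\ast\in\Y^\ast_G$ with no properness of $\varphi_{F\circ G,z^\ast}$ required, closes exactly this step: it shows the extra slices can only be larger, and that when $\varphi_{F\circ G,z^\ast}$ attains $-\infty$ both the true slice and the claimed slice are all of $\Z$. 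In this respect your writeup is more rigorous than the paper's.

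The one case you defer, $\varphi_{F\circ G,z^\ast}\equiv+\infty$ (equivalently $F\circ G\equiv\emptyset$), is not treated by the paper either, so it does not put you behind its proof; but be aware it cannot simply be ``disposed of'' in the direction you hope. If, say, $F\equiv\emptyset$ while $G$ satisfies the assumptions, then $F$ is convex and scalarly lower semicontinuous, $\Z^\ast_F=\emptyset$, the right-hand side of the first formula is an intersection over an empty index set (hence all of $\Z$), whereas $(F\circ G)^{\ast\ast}\equiv\emptyset$. This degeneracy is inherited from reading Remark~\ref{rem:SVconjugate}(ii) with an empty $\mathcal{K}$ and is a defect of the statement in that extreme case rather than of your argument; the clean fix is to note that whenever $F\circ G\not\equiv\emptyset$ your two subcases exhaust all $z^\ast\in\Z^\ast_F\setminus\Z^\ast_{F\circ G}$, and to exclude (or separately remark on) the case $F\circ G\equiv\emptyset$, as the paper implicitly does.
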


\begin{proof}
	Let $z^\ast\in\Z^+_+\setminus\{0\}$. If $\varphi^{\ast\ast}_{F\circ G,z^\ast}$ is proper, then so are $\varphi^{\ast}_{F\circ G,z^\ast}$, $\varphi_{F\circ G,z^\ast}$, and $\varphi_{F,z^\ast}$ by Corollary~\ref{cor:comp-sc}. Hence, we may apply Remark~\ref{rem:SVconjugate}(ii) with $\mathcal{K}=\Z^\ast_F$; see \eqref{eq:ZR}. Then, the first formula follows as a direct consequence of Theorem~\ref{thm:conj}. Note that $F\circ G$ is a convex set-valued function by Proposition~\ref{prop:propcomp}(ii). If $F\circ G$ is scalarly closed, then it is closed by Lemmata~\ref{lem:closedness},~\ref{lem:scalarization2}. Hence, if $F\circ G$ is proper and scalarly closed, then we have $R=R^{\ast\ast}$ by Theorem~\ref{thm:SVFM}. Therefore, the second formula follows from the first formula.
\end{proof}

\section{Proof of Theorem~\ref{thm:conj}}\label{sec:proof}

The aim of this section is prove Theorem~\ref{thm:conj}. The proof will rely on Liu's minimax inequality; see \citet{liu}, \citet[Corollary~11]{liurelated1}, \citet[Theorem~3.1]{liurelated2} instead of Sion's standard minimax inequality \citep[Corollary~3.3]{sion}. 

We start by a technical preparation for the proof. Recall that we work under Assumptions~\ref{asmp:conegen},~\ref{asmp:sc} and we fix convex scalarly lower semicontinuous set-valued functions $F\colon \Y\to\G_+(\Z)$, $G\colon \X\to\P_+(\Y)$.

Let us fix $x^\ast\in\X^\ast$, $z^\ast\in\Z_+^+\setminus\{0\}$ and define two functions $f,\tilde{f}\colon \X\times\Y\times\mathcal{B}_{\Y^\ast}\to[-\infty,+\infty]$ by
\begin{align*}
	& f(x,y,y^\ast)\coloneqq \ip{x^\ast,x}-\varphi_{F,z^\ast}(y)- I_{A(y^\ast)}(x,y),\\
	& \tilde{f}(x,y,y^\ast)\coloneqq \ip{x^\ast,x}-\varphi_{F,z^\ast}(y)- I_{\tilde{A}(y^\ast)}(x,y)
\end{align*}
for each $(x,y,y^\ast)\in\X\times\Y\times\mathcal{B}_{\Y^\ast}$, where
\begin{equation}\label{eq:A}
	A(y^\ast)\coloneqq \cb{(x,y)\in\X\times\Y\mid \ip{y^\ast,y}\geq \varphi_{G,y^\ast}(x)},\quad
	\tilde{A}(y^\ast)\coloneqq \cb{(x,y)\in\X\times\Y\mid \ip{y^\ast,y}> \varphi_{G,y^\ast}(x)}
\end{equation}
for each $y^\ast\in \mathcal{B}_{\Y^\ast}$. Since $\tilde{A}(y^\ast)\subseteq A(y^\ast)$, we have $f(x,y,y^\ast)\geq \tilde{f}(x,y,y^\ast)$ for each $(x,y,y^\ast)\in\X\times\Y\times\mathcal{B}_{\Y^\ast}$.

\begin{lemma}\label{lem:proof1}
	The following results hold:
	\begin{enumerate}[(i)]
		\item For each $y^\ast\in\mathcal{B}_{\Y^\ast}$, the function $(x,y)\mapsto f(x,y,y^\ast)$ is concave and upper semicontinuous function on $\X\times\Y$.
		\item For each $(x,y)\in\X\times\Y$, the function $y^\ast\mapsto f(x,y,y^\ast)$ is quasiconvex on $\mathcal{B}_{\Y^\ast}$.
		\item For each $y^\ast\in\mathcal{B}_{\Y^\ast}$, the function $(x,y)\mapsto \tilde{f}(x,y,y^\ast)$ is concave on $\X\times\Y$.
		\item For each $(x,y)\in\X\times\Y$, the function $y^\ast\mapsto \tilde{f}(x,y,y^\ast)$ is quasiconvex and weak*-lower semicontinuous on $\mathcal{B}_{\Y^\ast}$.
	\end{enumerate}
\end{lemma}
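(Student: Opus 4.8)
The plan is to read off all four claims from the way $f$ and $\tilde f$ decompose, as the sum of the affine term $\ip{x^\ast,x}$, the negated scalarization $-\varphi_{F,z^\ast}(y)$, and the negated indicator of $A(y^\ast)$ or $\tilde A(y^\ast)$, and to exploit the two distinct ways in which the scalarization $\varphi_{G,y^\ast}(x)=\sigma_{G(x)}(y^\ast)$ behaves: as a convex lower semicontinuous function of the primal variable $x$, and as a concave weak*-upper semicontinuous function of the dual variable $y^\ast$. The first two preparatory facts I would record are that $\varphi_{F,z^\ast}$ is convex and $\varphi_{G,y^\ast}$ is convex, both by Lemma~\ref{lem:scalarization}(ii) applied to the convex functions $F$ and $G$, and that both are lower semicontinuous since $F$ and $G$ are scalarly lower semicontinuous.

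For parts (i) and (iii), I would fix $y^\ast$ and work in $(x,y)$. The term $\ip{x^\ast,x}$ is affine and $-\varphi_{F,z^\ast}(y)$ is concave, so it remains to treat the indicators. Since $(x,y)\mapsto\varphi_{G,y^\ast}(x)-\ip{y^\ast,y}$ is convex on $\X\times\Y$ by the convexity of $\varphi_{G,y^\ast}$, its $0$-sublevel set $A(y^\ast)$ and its strict $0$-sublevel set $\tilde A(y^\ast)$ are both convex; hence $I_{A(y^\ast)}$ and $I_{\tilde A(y^\ast)}$ are convex, and $f(\cdot,\cdot,y^\ast)$ and $\tilde f(\cdot,\cdot,y^\ast)$ are concave as sums of concave functions. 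For the upper semicontinuity in (i), I would rewrite $-f(\cdot,\cdot,y^\ast)=-\ip{x^\ast,\cdot}+\varphi_{F,z^\ast}+I_{A(y^\ast)}$ and note that each summand is lower semicontinuous: the affine term is continuous, $\varphi_{F,z^\ast}$ is lower semicontinuous by scalar lower semicontinuity of $F$, and $I_{A(y^\ast)}$ is lower semicontinuous because $A(y^\ast)$ is closed, being the $0$-sublevel set of the lower semicontinuous function $(x,y)\mapsto\varphi_{G,y^\ast}(x)-\ip{y^\ast,y}$. This is exactly why (iii) cannot also assert upper semicontinuity: $\tilde A(y^\ast)$ is a strict sublevel set and need not be closed.

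For parts (ii) and (iv), I would fix $(x,y)$ and regard the functions as functions of $y^\ast$. Here $\ip{x^\ast,x}-\varphi_{F,z^\ast}(y)=:c$ is constant in $y^\ast$, so $f(x,y,\cdot)$ takes only the values $c$ and $-\infty$, being equal to $c$ precisely on $\{y^\ast:\ip{y^\ast,y}\geq\varphi_{G,y^\ast}(x)\}$ and to $-\infty$ off it, and symmetrically for $\tilde f$ with a strict inequality. The key observation is that, by \eqref{eq:sc}, $\varphi_{G,y^\ast}(x)=\sigma_{G(x)}(y^\ast)=\inf_{y'\in G(x)}\ip{y^\ast,y'}$ is an infimum of weak*-continuous linear functionals of $y^\ast$, so the function $g(y^\ast):=\varphi_{G,y^\ast}(x)-\ip{y^\ast,y}$ is concave and weak*-upper semicontinuous on $\Y^\ast$. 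For any real level $r<c$ one computes $\{y^\ast:f(x,y,y^\ast)\leq r\}=\{g>0\}$ and $\{y^\ast:\tilde f(x,y,y^\ast)\leq r\}=\{g\geq 0\}$, whereas for $r\geq c$ the sublevel set is all of $\mathcal{B}_{\Y^\ast}$; intersecting with the convex set $\mathcal{B}_{\Y^\ast}$ (Assumption~\ref{asmp:conegen}), all of these are convex as superlevel sets of the concave function $g$, which yields quasiconvexity in both (ii) and (iv). For the weak*-lower semicontinuity in (iv), the set $\{g\geq 0\}$ is weak*-closed as a nonstrict superlevel set of the weak*-upper semicontinuous function $g$, so every sublevel set of $\tilde f(x,y,\cdot)$ is weak*-closed; the same argument fails for $f$ because the relevant set is the strict superlevel set $\{g>0\}$, which need not be weak*-closed, explaining why (ii) does not claim weak*-lower semicontinuity.

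The only genuinely delicate points are, first, the bookkeeping of the extended-real values when $c\in\{\pm\infty\}$ (i.e.\ $\varphi_{F,z^\ast}(y)\in\{\mp\infty\}$), which I would resolve under the convention that the subtracted indicator forces the value $-\infty$ outside the relevant set, so that the two-value description and all sublevel-set identities above persist; and second, keeping straight the asymmetric pairing that matches each regularity property to the correct set, namely $A(y^\ast)$ (closed inequality) with the upper semicontinuity of $f$ in $(x,y)$ and $\tilde A(y^\ast)$ (strict inequality) with the weak*-lower semicontinuity of $\tilde f$ in $y^\ast$. This pairing is precisely what is dictated by the one-sided hypotheses of Liu's minimax inequality that the lemma is built to feed, and getting these roles right is the main thing to be careful about.
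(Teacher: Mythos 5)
Your proof is correct and takes essentially the same route as the paper's: parts (i)/(iii) follow from the convexity and lower semicontinuity of $\varphi_{G,y^\ast}$ and $\varphi_{F,z^\ast}$, making $A(y^\ast)$ closed convex and $\tilde{A}(y^\ast)$ convex, and parts (ii)/(iv) follow from the concavity and weak*-upper semicontinuity of $y^\ast\mapsto\sigma_{G(x)}(y^\ast)$ as an infimum of weak*-continuous linear functionals, with exactly the paper's pairing of the closed/strict inequality sets to the respective regularity claims. The only cosmetic differences are that where the paper invokes Lemma~\ref{lem:ind} applied to the sets $B(x,y)$ and $\tilde{B}(x,y)$, you verify convexity and weak*-closedness of the sublevel sets of $f(x,y,\cdot)$ and $\tilde{f}(x,y,\cdot)$ directly (in effect inlining the proof of that lemma), and you make the extended-real convention for $f$ outside $A(y^\ast)$ explicit, which the paper leaves implicit.
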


\begin{proof}
	\begin{enumerate}[(i)]
		\item Let $y^\ast\in \mathcal{B}_{\Y^\ast}$. Since $G$ is scalarly lower semicontinuous and convex, $\varphi_{G,y^\ast}$ is a convex lower semicontinuous function on $\X$. Moreover, $y\mapsto\ip{y^\ast,y}$ is a continuous linear function on $\Y$. It follows that $A(y^\ast)$ is a closed convex subset of $\X\times\Y$. Hence, $I_{A(y^\ast)}$ is a convex lower semicontinuous function on $\X\times\Y$. Since $F$ is scalarly lower semicontinuous and convex, $\varphi_{F,z^\ast}$ is a convex lower semicontinuous function on $\Y$. Moreover, $x\mapsto \ip{x^\ast,x}$ is a continuous linear function on $\X$. Therefore, $(x,y)\mapsto f(x,y,y^\ast)$ is a concave upper semicontinuous function on $\X\times\Y$.
		
		\item Let $(x,y)\in\X\times\Y$. We have
		\[
		I_{A(y^\ast)}(x,y)=I_{B(x,y)}(y^\ast),
		\]
		where
		\[
		B(x,y)\coloneqq\cb{y^\ast\in \mathcal{B}_{\Y^\ast}\mid \ip{y^\ast,y}\geq \sigma_{G(x)}(y^\ast)}.
		\]
		We show that $\mathcal{B}_{\Y^\ast}\setminus B(x,y)$ is a convex set. Let $y^{\ast,1},y^{\ast,2}\in \mathcal{B}_{\Y^\ast}\setminus B(x,y)$ and $\lambda \in (0,1)$. Hence, $\ip{y^\ast,y}<\sigma_{G(x)}(y^{\ast,1})$ and $\ip{y^\ast,y}<\sigma_{G(x)}(y^{\ast,1})$. Note that $\sigma_{G(x)}$ is a concave function as a supremum of linear functions. Hence,
		\begin{align*}
			\ip{\lambda y^{\ast,1}+(1-\lambda)y^{\ast,2},y}&=\lambda\ip{y^{\ast,1},y}+(1-\lambda)\ip{y^{\ast,2},y}\\
			&<\lambda \sigma_{G(x)}(y^{\ast,1})+(1-\lambda)\sigma_{G(x)}(y^{\ast,2})\\
			&\leq \sigma_{G(x)}(\lambda y^{\ast,1}+(1-\lambda)y^{\ast,2})
		\end{align*}
		so that $\lambda y^{\ast,1}+(1-\lambda)y^{\ast,2}\in \mathcal{B}_{y^\ast}\setminus B(x,y)$.
		Therefore, $I_{B(x,y)}$ is a quasiconcave function on $\mathcal{B}_{\Y^\ast}$ by Lemma~\ref{lem:ind}. This implies that $y^\ast\mapsto f(x,y,y^\ast)$ is a quasiconvex function on $\mathcal{B}_{\Y^\ast}$.
		
		\item Let $y^\ast\in \mathcal{B}_{\Y^\ast}$. Since $G$ is convex, $\varphi_{G,y^\ast}$ is a convex function on $\X$. Moreover, $y\mapsto\ip{y^\ast,y}$ is a linear function on $\Y$. It follows that $\tilde{A}(y^\ast)$ is a convex subset of $\X\times\Y$. Hence, $I_{\tilde{A}(y^\ast)}$ is a convex function on $\X\times\Y$. Since $F$ is convex, $\varphi_{F,z^\ast}$ is a convex function on $\Y$. Moreover, $x\mapsto \ip{x^\ast,x}$ is a linear function on $\X$. Therefore, $(x,y)\mapsto \tilde{f}(x,y,y^\ast)$ is a concave function on $\X\times\Y$.
		
		\item Let $(x,y)\in\X\times\Y$. We have
		\[
		I_{\tilde{A}(y^\ast)}(x,y)=I_{\tilde{B}(x,y)}(y^\ast),
		\]
		where
		\[
		\tilde{B}(x,y)\coloneqq\cb{y^\ast\in \mathcal{B}_{\Y^\ast}\mid \ip{y^\ast,y}> \sigma_{G(x)}(y^\ast)}.
		\]
		We show that $\mathcal{B}_{\Y^\ast}\setminus \tilde{B}(x,y)$ is a convex set. Let $y^{\ast,1},y^{\ast,2}\in \mathcal{B}_{\Y^\ast}\setminus \tilde{B}(x,y)$ and $\lambda \in (0,1)$. Hence, $\ip{y^\ast,y}\leq\sigma_{G(x)}(y^{\ast,1})$ and $\ip{y^\ast,y}\leq\sigma_{G(x)}(y^{\ast,1})$. Note that $\sigma_{G(x)}$ is a concave function as a supremum of linear functions. Hence,
		\begin{align*}
			\ip{\lambda y^{\ast,1}+(1-\lambda)y^{\ast,2},y}&=\lambda\ip{y^{\ast,1},y}+(1-\lambda)\ip{y^{\ast,2},y}\\
			&\leq\lambda \sigma_{G(x)}(y^{\ast,1})+(1-\lambda)\sigma_{G(x)}(y^{\ast,2})\\
			&\leq \sigma_{G(x)}(\lambda y^{\ast,1}+(1-\lambda)y^{\ast,2})
		\end{align*}
		so that $\lambda y^{\ast,1}+(1-\lambda)y^{\ast,2}\in \mathcal{B}_{y^\ast}\setminus \tilde{B}(x,y)$.
		Therefore, $I_{\tilde{B}(x,y)}$ is a quasiconcave function on $\mathcal{B}_{\Y^\ast}$ by Lemma~\ref{lem:ind}. Moreover, $\sigma_{G(x)}$ is a weak*-upper semicontinuous function as an infimum of weak*-continuous functions. Hence, $\tilde{B}(x,y)$ is a weak*-open set so that $I_{\tilde{B}(x,y)}$ is a weak*-upper semicontinuous function. It follows that $y^\ast\mapsto\tilde{f}(x,y,y^\ast)$ is a quasiconvex weak*-lower semicontinuous function on $\mathcal{B}_{\Y^\ast}$.
	\end{enumerate}
\end{proof}

\begin{lemma}\label{lem:proof2}
	Let $y^\ast\in\mathcal{B}_{\Y^\ast}$. Then, we have the following results:
	\begin{enumerate}[(i)]
		\item Let $y\in\Y$ and define
		\[
		A_y(y^\ast)\coloneqq\{x\in\X\mid (x,y)\in A(y^\ast)\},\quad \tilde{A}_y(y^\ast)\coloneqq\{x\in\X\mid (x,y)\in \tilde{A}(y^\ast)\}.
		\]
		Then, it holds $A_y(y^\ast)=\cl(\tilde{A}_y(y^\ast))$.
		\item It holds \[
		\sup_{\substack{x\in\X,\\ y\in\Y}}f(x,y,y^\ast)=\sup_{\substack{x\in\X,\\ y\in\Y}}\tilde{f}(x,y,y^\ast).
		\]
	\end{enumerate}
\end{lemma}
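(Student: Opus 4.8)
The plan is to recognize that, for a fixed $y\in\Y$ and $y^\ast\in\mathcal{B}_{\Y^\ast}$, the sets $A_y(y^\ast)$ and $\tilde{A}_y(y^\ast)$ are precisely the closed and the open sublevel sets of the scalarization $\psi\coloneqq\varphi_{G,y^\ast}$ at the level $c\coloneqq\ip{y^\ast,y}$; that is, $A_y(y^\ast)=\{\psi\leq c\}$ and $\tilde{A}_y(y^\ast)=\{\psi<c\}$. Since $G$ is convex and scalarly lower semicontinuous, $\psi$ is a convex lower semicontinuous function on $\X$, so $\{\psi\leq c\}$ is closed and contains $\{\psi<c\}$; this gives the easy inclusion $\cl(\tilde{A}_y(y^\ast))\subseteq A_y(y^\ast)$ for free. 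All the content of (i) lies in the reverse inclusion $A_y(y^\ast)\subseteq\cl(\tilde{A}_y(y^\ast))$, which is false for a general convex lower semicontinuous $\psi$ (take $\psi$ constant equal to $c$), and this is exactly where Assumption~\ref{asmp:sc} must enter.

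To prove $A_y(y^\ast)\subseteq\cl(\tilde{A}_y(y^\ast))$ I would fix $x$ with $\psi(x)\leq c$ and produce a net in $\{\psi<c\}$ converging to $x$, splitting into the two cases of Assumption~\ref{asmp:sc}. Under (a), $\inf_{x'\in\X}\psi(x')=-\infty$ furnishes some $x^0$ with $\psi(x^0)<c$, and convexity then gives $\psi((1-\lambda)x+\lambda x^0)\leq(1-\lambda)\psi(x)+\lambda\psi(x^0)<c$ for every $\lambda\in(0,1]$; letting $\lambda\downarrow0$ yields points of $\{\psi<c\}$ converging to $x$. Under (b), I would pick any $k\in\X^\sharp_+$ (nonempty by hypothesis); since $\X^\sharp_+$ is a cone, $\varepsilon k\in\X^\sharp_+$ for $\varepsilon>0$, so strict monotonicity of $\psi$ gives $\psi(x+\varepsilon k)<\psi(x)\leq c$, whence $x+\varepsilon k\in\{\psi<c\}$ and $x+\varepsilon k\to x$ as $\varepsilon\downarrow0$. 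The degenerate situations ($\psi(x)=-\infty$, in which $x$ already lies in $\{\psi<c\}$, and $A_y(y^\ast)=\emptyset$, in which $\tilde{A}_y(y^\ast)=\emptyset$ too) are immediate; I would take $y^\ast\neq0$ throughout, consistent with $\mathcal{B}_{\Y^\ast}$ being a base of $\Y^+_+\setminus\{0\}$.

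For (ii), I would take the supremum in $x$ before $y$ so as to eliminate the indicator: for fixed $y$ one has $\sup_{x\in\X}f(x,y,y^\ast)=\sup_{x\in A_y(y^\ast)}\of{\ip{x^\ast,x}-\varphi_{F,z^\ast}(y)}$, and likewise for $\tilde{f}$ over $\tilde{A}_y(y^\ast)$. For fixed $y$ the objective $x\mapsto\ip{x^\ast,x}-\varphi_{F,z^\ast}(y)$ is affine and continuous in $x$, and the supremum of a continuous function over a set equals its supremum over the closure of that set (approximate any point of the closure by a net in the set and invoke continuity). Combined with part (i), $A_y(y^\ast)=\cl(\tilde{A}_y(y^\ast))$, this gives $\sup_{x}f(x,y,y^\ast)=\sup_x\tilde{f}(x,y,y^\ast)$ for every $y$, and taking the supremum over $y$ yields the claim. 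The inequality $\sup_{x,y}f\geq\sup_{x,y}\tilde{f}$ already follows from $f\geq\tilde{f}$, so only this reverse direction needs argument.

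The main obstacle is part (i): identifying the closed sublevel set with the closure of the open one. This density statement genuinely needs the structural hypothesis Assumption~\ref{asmp:sc} — either the unboundedness below in (a), which makes the open sublevel set nonempty and reachable by convex combinations, or the strictly decreasing direction supplied by $\X^\sharp_+$ in (b), which lets one perturb any point of the closed sublevel set into the open one. Once (i) is established, part (ii) is routine, resting only on the continuity of the affine-in-$x$ objective and the elementary fact that passing to the closure does not alter the supremum of a continuous function.
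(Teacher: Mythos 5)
Your proposal is correct and follows essentially the same route as the paper: part (i) is proved by the identical two-case perturbation argument (convex combination with a point where $\varphi_{G,y^\ast}<\ip{y^\ast,y}$ under Assumption~\ref{asmp:sc}(a), and translation by a small element of $\X_+^\sharp$ under Assumption~\ref{asmp:sc}(b)), and part (ii) rests on the same fact the paper phrases via support functions of indicator conjugates, namely that taking the closure of $\tilde{A}_y(y^\ast)$ does not change the supremum of the affine objective. Your sublevel-set framing and explicit handling of the degenerate cases are only cosmetic differences.
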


\begin{proof}
	\begin{enumerate}[(i)]
		
		\item Let $y\in\Y$. Since $A(y^\ast)$ is a closed set, $A_y(y^\ast)$ is also closed as the section of a closed set. Since $\tilde{A}_y(y^\ast)\subseteq A_y(y^\ast)$ and $A_y(y^\ast)$ is a closed set, we have $\cl(\tilde{A}_y(y^\ast))\subseteq A_y(y^\ast)$.
		
		Conversely, let $x\in A_y(y^\ast)$. Hence, $\ip{y^\ast,y}\geq \varphi_{G,y^\ast}(x)$. First, suppose that Assumption~\ref{asmp:sc}(a) holds. Then, we can find $\bar{x}\in\X$ such that $\varphi_{G,y^\ast}(\bar{x})<\ip{y^\ast,y}$. Let
		\[
		x^n\coloneqq \of{1-\frac1n}x+\frac1n \bar{x},\quad n\in\mathbb{N}.
		\]
		Then, since $\varphi_{G,y^\ast}$ is a convex function, we obtain
		\[
		\varphi_{G,y^\ast}(x^n)\leq \of{1-\frac1n}\varphi_{G,y^\ast}(x)+\frac1n\varphi_{G,y^\ast}(\bar{x})<\ip{y^\ast,y},
		\]
		that is, $x^n\in \tilde{A}_y(y^\ast)$ for each $n\in\mathbb{N}$. Moreover, $(x^n)_{n\in\mathbb{N}}$ converges to $x$. Hence, $x\in\cl(\tilde{A}_y(y^\ast))$.
		
		Second, suppose that Assumption~\ref{asmp:sc}(b) holds. Let $\bar{x}\in\X_+^\sharp$ and define
		\[
		x^n\coloneqq x+\frac1n \bar{x},\quad n\in\mathbb{N}.
		\]
		For each $n\in\mathbb{N}$, note that $x^n\in x+\X_+^\sharp$, which implies that
		\[
		\ip{y^\ast,y}\geq \varphi_{G,y^\ast}(x)>\varphi_{G,y^\ast}(x^n)
		\]
		since $\varphi_{G,y^\ast}$ is strictly decreasing. Hence, $x^n\in \tilde{A}_y(y^\ast)$ for each $n\in\mathbb{N}$. Moreover, $(x^n)_{n\in\mathbb{N}}$ converges to $x$. Therefore, $x\in\cl(\tilde{A}_y(y^\ast))$.
		
		In each case, we establish $A_y(y^\ast)=\cl(\tilde{A}_y(y^\ast))$.
		
		\item Note that
		\begin{align*}
			\sup_{\substack{x\in\X,\\ y\in\Y}}f(x,y,y^\ast)&=\sup_{y\in\Y}\of{\sup_{x\in\X}\of{\ip{x^\ast,x}-I_{A(y^\ast)}(x,y)}-\varphi_{F,z^\ast}(y)}\\
			&=\sup_{y\in\Y}\of{\sup_{x\in\X}\of{\ip{x^\ast,x}-I_{A_y(y^\ast)}(x)}-\varphi_{F,z^\ast}(y)}\\
			&=\sup_{y\in\Y}\of{I^\ast_{A_y(y^\ast)}(x^\ast)-\varphi_{F,z^\ast}(y)}.
		\end{align*}
		Let us fix $y\in\Y$. Note that $I^\ast_{A_y(y^\ast)}(x^\ast)=\sup_{x\in A_y(y^\ast)}\ip{x^\ast,x}=-\sigma_{A_y(y^\ast)}(-x^\ast)$. Since $A_y(y^\ast)=\cl(\tilde{A}_y(y^\ast))$ by (i), the sets $A_y(y^\ast)$ and $\tilde{A}_y(y^\ast)$ have the same support function so that $I^\ast_{A_y(y^\ast)}(x^\ast)=I^\ast_{\tilde{A}_y(y^\ast)}(x^\ast)$. Therefore,
		\begin{align*}
			\sup_{\substack{x\in\X,\\ y\in\Y}}f(x,y,y^\ast)&=\sup_{y\in\Y}\of{I^\ast_{\tilde{A}_y(y^\ast)}(x^\ast)-\varphi_{F,z^\ast}(y)}\\
			&=\sup_{y\in\Y}\of{\sup_{x\in\X}\of{\ip{x^\ast,x}-I_{\tilde{A}_y(y^\ast)}(x)}-\varphi_{F,z^\ast}(y)}\\
			&=\sup_{y\in\Y}\of{\sup_{x\in\X}\of{\ip{x^\ast,x}-I_{\tilde{A}(y^\ast)}(x,y)}-\varphi_{F,z^\ast}(y)}\\
			&=\sup_{\substack{x\in\X,\\ y\in\Y}}\tilde{f}(x,y,y^\ast),
		\end{align*}
		which concludes the proof.
	\end{enumerate}
\end{proof}

We also recall the statement of Liu's minimax inequality.

\begin{theorem}\label{thm:liu}
	\citep{liu} Let $\W,\mathcal{V}$ be topological linear spaces and let $A\subseteq \W$, $B\subseteq\mathcal{V}$ be nonempty convex sets. Let $g,\tilde{g}\colon A\times B\to[-\infty,+\infty]$ be two functions satisfying the following properties:
	\begin{enumerate}[(i)]
		\item For each $v\in\mathcal{V}$, the function $w\mapsto g(w,v)$ is upper semicontinuous.
		\item For each $v\in\mathcal{V}$, the function $w\mapsto \tilde{g}(w,v)$ is quasiconcave.
		\item For each $w\in\mathcal{W}$, the function $v\mapsto g(w,v)$ is quasiconvex.
		\item For each $w\in\mathcal{W}$, the function $v\mapsto g(w,v)$ is lower semicontinuous.
		\item For each $(w,v)\in\W\times\mathcal{V}$, it holds $\tilde{g}(w,v)\leq g(w,v)$.
		\item $A$ is a compact set.
	\end{enumerate}
	Then, we have
	\[
	\inf_{v\in\mathcal{V}}\sup_{w\in\W}\tilde{g}(w,v)\leq \sup_{w\in\W}\inf_{v\in\mathcal{V}}g(w,v).
	\]
\end{theorem}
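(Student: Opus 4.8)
The plan is to prove the inequality by contradiction, reducing it to a finite-dimensional minimax statement that can be settled by a Knaster--Kuratowski--Mazurkiewicz (KKM)/fixed-point argument. First I would dispose of the trivial cases: if $\sup_{w\in A}\inf_{v\in B}g(w,v)=+\infty$ or $\inf_{v\in B}\sup_{w\in A}\tilde g(w,v)=-\infty$ there is nothing to prove, so I may assume the right-hand side lies strictly below the left-hand side and fix a real number $c$ with
\[
\sup_{w\in A}\inf_{v\in B}g(w,v)<c<\inf_{v\in B}\sup_{w\in A}\tilde g(w,v).
\]
The left inequality says that for every $w\in A$ there is some $v\in B$ with $g(w,v)<c$; since each set $\cb{w\in A\mid g(w,v)<c}$ is open by the upper semicontinuity in (i) and $A$ is compact by (vi), I would extract a finite subcover, i.e.\ points $v_1,\dots,v_n\in B$ with $A=\bigcup_{i=1}^n\cb{w\in A\mid g(w,v_i)<c}$. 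The right inequality says that $\sup_{w\in A}\tilde g(w,v)>c$ for \emph{every} $v\in B$, in particular for every convex combination $v(\lambda)=\sum_{i=1}^n\lambda_i v_i$, which lies in $B$ by convexity of $B$.

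The heart of the proof is then a finite-dimensional claim: there is a point $\lambda^\ast$ in the simplex $\Delta^{n-1}$ such that $\tilde g(w,v(\lambda^\ast))\le c$ for all $w\in A$, which contradicts the preceding observation. To establish this claim I would run a KKM / Fan--Browder fixed-point argument on $\Delta^{n-1}$ through the affine map $\lambda\mapsto v(\lambda)$. Quasiconvexity of $g(w,\cdot)$ from (iii) feeds the covering condition: if $g(w,v_i)<c$ for all indices $i$ in the support of $\lambda$, then $g(w,v(\lambda))<c$, and hence $\tilde g(w,v(\lambda))\le g(w,v(\lambda))<c$ by (v). Quasiconcavity of $\tilde g(\cdot,v)$ from (ii) is used to guarantee that the associated ``good'' subsets of the convex compact set $A$ are convex, so that the inclusion/convex-valuedness hypotheses of the KKM lemma (respectively the fixed-point theorem) are met. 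The minorant/majorant relation $\tilde g\le g$ is precisely the device that lets quasiconcavity be demanded only of $\tilde g$ while upper semicontinuity, quasiconvexity, and lower semicontinuity are demanded of $g$.

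The main obstacle I anticipate is that only \emph{separate} semicontinuity is available --- $g(\cdot,v)$ is upper semicontinuous in $w$ and $g(w,\cdot)$ is lower semicontinuous in $v$ by (iv) --- but no \emph{joint} semicontinuity. This is exactly what blocks a one-line application of KKM: the sets one would like to intersect, such as $\cb{\lambda\mid \cb{w\mid g(w,v(\lambda))\ge c}\cap \cb{w\mid g(w,v_i)\ge c}\ne\emptyset}$, need not be closed in $\lambda$, because passing to the limit mixes the two arguments of $g$. To circumvent this I would either (a) peel off one dimension at a time by induction on $n$, reducing to a single segment $[v_i,v_j]\subseteq B$, where lower semicontinuity of $g(w,\cdot)$ along a one-parameter family together with connectedness of the segment is enough, or (b) invoke Liu's refinement of Sion's scheme, which is tailored to run the fixed-point/KKM step precisely under these two-function hypotheses and separate semicontinuity. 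The remaining bookkeeping --- passing from $[-\infty,+\infty]$-valued functions to finite levels via sub- and super-level sets, and checking the partition-of-unity/continuity details of the fixed-point step --- is routine once the induction (or Liu's lemma) is in place.
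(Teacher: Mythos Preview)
The paper does not give its own proof of this statement: Theorem~\ref{thm:liu} is merely recalled from \citet{liu} (see also \citet{liurelated1,liurelated2}) and then applied as a black box in the proof of Theorem~\ref{thm:conj}. There is therefore nothing in the paper to compare your proposal against.

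As an independent sketch of Liu's argument your outline is broadly on the right track: the reduction to a finite family $v_1,\dots,v_n$ via compactness and upper semicontinuity, followed by a KKM/Fan--Browder step on the simplex, is indeed the standard route in the Sion tradition. Two caveats, however. First, your option~(b), ``invoke Liu's refinement of Sion's scheme,'' is circular here, since that refinement \emph{is} the theorem you are proving; only option~(a), the induction on $n$ that handles the separate-semicontinuity obstacle one segment at a time, is a genuine strategy. Second, the induction step is where all the work lies and your proposal stops short of it: you would need to show precisely how, on a segment $[v_i,v_j]$, lower semicontinuity of $g(w,\cdot)$ together with quasiconcavity of $\tilde g(\cdot,v)$ and the inequality $\tilde g\le g$ produces the required intersection property, and then how the two-point case feeds the $n$-point KKM covering. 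Without that core lemma the argument remains a plan rather than a proof.
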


\begin{proof}\hspace{-0.3em}\textbf{of Theorem~\ref{thm:conj}}
	Let us fix $z^\ast\in \Z^+_+\setminus\{0\}$ such that $\varphi^\ast_{F\circ G,z^\ast}$ is a proper function. Let $x^\ast\in\X^\ast$. For each $x\in\X$ and $y\in\Y$, since $G(x)\in\G_+(\Y)$, by \eqref{separation} and the positive homogeneity of support functions, we have
	\begin{align}\label{eq:sep2}
		y\in G(x)\quad &\Leftrightarrow\quad \forall y^\ast\in\Y^+_+\setminus\{0\}\colon \ip{y^\ast,y}\geq \varphi_{G,y^\ast}(x)\notag \\
		&\Leftrightarrow\quad \forall y^\ast\in\mathcal{B}_{\Y^\ast}\colon \ip{y^\ast,y}\geq \varphi_{G,y^\ast}(x).
	\end{align}
	Using the definition of conjugate function and Proposition~\ref{prop:comp-sc}, we obtain
	\begin{align*}
		\varphi^\ast_{F\circ G,z^\ast}(x^\ast)&=\sup_{x\in\X}\of{\ip{x^\ast,x}-\varphi_{F\circ G,z^\ast}(x)}\\
		&=\sup_{x\in\X}\of{\ip{x^\ast,x}-\inf_{y\in G(x)}\varphi_{F,z^\ast}(y)}\\
		&=\sup_{\substack{x\in\X,\\ y\in G(x)}}\of{\ip{x^\ast,x}-\varphi_{F,z^\ast}(y)}.
	\end{align*}
	Combining this with \eqref{eq:sep2} gives
	\begin{equation}\label{eq:supinf}
		\varphi^\ast_{F\circ G,z^\ast}(x^\ast)=\sup_{\substack{x\in\X,\\ y\in\Y}}\cb{\ip{x^\ast,x}-\varphi_{F,z^\ast}(y)\mid \forall y^\ast\in\mathcal{B}_{\Y^\ast}\colon \ip{y^\ast,y}\geq \varphi_{G,y^\ast}(x)}.
	\end{equation}
	For each $y^\ast\in\mathcal{B}_{\Y^\ast}$, let us define $A(y^\ast), \tilde{A}(y^\ast)$ by \eqref{eq:A}. Using indicator functions and the property \eqref{indicatorcalculus}, we may rewrite \eqref{eq:supinf} as
	\begin{align*}
		\varphi^\ast_{F\circ G,z^\ast}(x^\ast)&=\sup_{\substack{x\in\X,\\ y\in\Y}}\cb{\ip{x^\ast,x}-\varphi_{F,z^\ast}(y)\mid \forall y^\ast\in\mathcal{B}_{\Y^\ast}\colon (x,y)\in A(y^\ast)}\\
		&=\sup_{\substack{x\in\X,\\ y\in\Y}}\of{\ip{x^\ast,x}-\varphi_{F,z^\ast}(y)-I_{
				\bigcap_{y^\ast\in\mathcal{B}_{\Y^\ast}} A(y^\ast)}(x,y)}\\
		&=\sup_{\substack{x\in\X,\\ y\in\Y}}\of{\ip{x^\ast,x}-\varphi_{F,z^\ast}(y)-\sup_{y^\ast\in\mathcal{B}_{\Y^\ast}} I_{A(y^\ast)}(x,y)}\\
		&=\sup_{\substack{x\in\X,\\ y\in\Y}}\inf_{y^\ast\in\mathcal{B}_{\Y^\ast}}\of{\ip{x^\ast,x}-\varphi_{F,z^\ast}(y)- I_{A(y^\ast)}(x,y)}\\
		&=\sup_{\substack{x\in\X,\\ y\in\Y}}\inf_{y^\ast\in\mathcal{B}_{\Y^\ast}}f(x,y,y^\ast).
	\end{align*}
	To be able to change the order of supremum and infimum in the last line, we use Liu's minimax inequality. Since $f(x,y,y^\ast)\geq \tilde{f}(x,y,y^\ast)$ for each $(x,y,y^\ast)\in\X\times\Y\times\mathcal{B}_{\Y^\ast}$, by Lemma~\ref{lem:proof1}, Theorem~\ref{thm:liu}, and Lemma~\ref{lem:proof2}(ii), we get
	\[
	\varphi^\ast_{F\circ G,z^\ast}(x^\ast)=\sup_{\substack{x\in\X,\\ y\in\Y}}\inf_{y^\ast\in\mathcal{B}_{\Y^\ast}}f(x,y,y^\ast)\geq \inf_{y^\ast\in\mathcal{B}_{\Y^\ast}}\sup_{\substack{x\in\X,\\ y\in\Y}}\tilde{f}(x,y,y^\ast)=\inf_{y^\ast\in\mathcal{B}_{\Y^\ast}}\sup_{\substack{x\in\X,\\ y\in\Y}}f(x,y,y^\ast).
	\]
	On the other hand, we have
	\[
	\sup_{\substack{x\in\X,\\ y\in\Y}}\inf_{y^\ast\in\mathcal{B}_{\Y^\ast}}f(x,y,y^\ast)\leq \inf_{y^\ast\in\mathcal{B}_{\Y^\ast}}\sup_{\substack{x\in\X,\\ y\in\Y}}f(x,y,y^\ast)
	\]
	by weak duality. Hence, we obtain
	\[
	\varphi^\ast_{F\circ G,z^\ast}(x^\ast)=\inf_{y^\ast\in\mathcal{B}_{\Y^\ast}}\sup_{\substack{x\in\X,\\ y\in\Y}}f(x,y,y^\ast)
	=\inf_{y^\ast\in\mathcal{B}_{\Y^\ast}}\sup_{\substack{x\in\X,\\ y\in\Y}}\of{\ip{x^\ast,x}-\varphi_{F,z^\ast}(y)- I_{A(y^\ast)}(x,y)}
	=\inf_{y^\ast\in\mathcal{B}_{\Y^\ast}}h(y^\ast),
	\]
	where
	\[
	h(y^\ast)\coloneqq\sup_{\substack{x\in\X,\\ y\in\Y}}\cb{\ip{x^\ast,x}-\varphi_{F,z^\ast}(y)\mid \ip{y^\ast,y}\geq \varphi_{G,y^\ast}(x)}
	\]
	for each $y^\ast\in\mathcal{B}_{\Y^\ast}$. We use Lagrange duality to calculate $h$. Let us fix $y^\ast\in\mathcal{B}_{\Y^\ast}$. Note that $(x,y)\mapsto \ip{x^\ast,x}-\varphi_{F,z^\ast}(y)$ is a concave upper semicontinuous function on $\X\times\Y$ as argued in the proof of Lemma~\ref{lem:proof1}(i). Similarly, $(x,y)\mapsto \varphi_{G,y^\ast}(x)-\ip{y^\ast,y}$ is a convex lower semicontinuous function on $\X\times\Y$.
	
	Since $\varphi^\ast_{F\circ G,z^\ast}$ is assumed to be a proper function, we have $h(y^\ast)>-\infty$ for each $y^\ast\in\mathcal{B}_{\Y^\ast}$ and also that $\varphi_{F\circ G,z^\ast}$ is proper. Then, by Corollary~\ref{cor:comp-sc}, $\varphi_{F,z^\ast}$ is proper.
	
	Note that $\varphi_{G,y^\ast}$ is a lower semicontinuous convex function on $\X$. Hence, there are three possibilities concerning the properness of this function:
	
	\textbf{Case 1:} Suppose that $\varphi_{G,y^\ast}$ is a proper function. Since $h(y^\ast)>-\infty$ and $\varphi_{F,z^\ast}$ is a proper function, there exists $(\tilde{x},\tilde{y})\in\dom(\varphi_{G,y^\ast})\times\dom(\varphi_{F,z^\ast})$ such that $\varphi_{G,y^\ast}(\tilde{x})\leq \ip{y^\ast,\tilde{y}}$. Moreover, Slater's condition also holds for this problem:
	\[
	\exists (x,y)\in\dom(\varphi_{G,y^\ast})\times\dom(\varphi_{F,z^\ast})\colon \varphi_{G,y^\ast}(x)<\ip{y^\ast,y}.
	\]
	Indeed, this condition holds trivially under Assumption~\ref{asmp:sc}(a). Next, suppose that Assumption~\ref{asmp:sc}(b) holds. Let $\bar{x}\in\X^\sharp_+$. Then, $\varphi_{G,y^\ast}(\tilde{x}+\bar{x})<\varphi_{G,y^\ast}(\tilde{x})\leq \ip{y^\ast,\tilde{y}}$. Hence, choosing $x=\tilde{x}+\bar{x}$ and $y=\tilde{y}$ verifies Slater's condition. Therefore, by strong duality theorem for convex optimization (see, e.g., \citet[Theorem~2.9.2]{zalinescu}), we have
	\[
	h(y^\ast)
	=\inf_{\lambda\geq 0}\sup_{\substack{x\in\X,\\ y\in\Y}}\of{\ip{x^\ast,x}-\varphi_{F,z^\ast}(y)+\lambda\ip{y^\ast,y}-\lambda \varphi_{G,y^\ast}(x)}.
	\]
	
	\textbf{Case 2:} Suppose that $\varphi_{G,y^\ast}(x)=+\infty$ for every $x\in\X$. In this case, the maximization problem for $h(y^\ast)$ has empty feasible region so that $h(y^\ast)=-\infty$, which is a contradiction. Hence, this case is not possible.
	
	\textbf{Case 3:} Suppose that $\varphi_{G,y^\ast}(\bar{x})=-\infty$ for some $\bar{x}\in\X$. Then, by \citet[Proposition~2.2.5]{zalinescu}, $\varphi_{G,y^\ast}(x)=-\infty$ for every $x\in \dom(\varphi_{G,y^\ast})$. Hence, the feasible region of the maximization problem for $h(y^\ast)$ is $\dom(\varphi_{G,y^\ast})\times\Y$. Then,
	\[
	h(y^\ast)= \sup_{\substack{x\in\dom(\varphi_{G,y^\ast}),\\ y\in\Y}}\of{\ip{x^\ast,x}-\varphi_{F,z^\ast}(y)}=
	\sup_{x\in\dom(\varphi_{G,y^\ast})}\ip{x^\ast,x} -\inf_{y\in\Y}\varphi_{F,z^\ast}(y).
	\]
	However, for every $\bar{y}^\ast\in\mathcal{B}_{\Y^\ast}$ such that $\varphi_{G,y^\ast}$ is proper (Case~1), we simply have $h(y^\ast)\geq h(\bar{y})$ since the feasible region of the maximization problem for $h(\bar{y}^\ast)$ is always a subset of $\dom(\varphi_{G,y^\ast})\times\Y$.
	
	Consequently, we may restrict our attention to the set of all $y^\ast\in\Y^+_+\setminus\{0\}$ for which $\varphi_{G,y^\ast}$ is proper, let us denote this set by $\Y^\ast_G$. It is easy to see that this set is a cone. Hence,
	\begin{align*}
		\varphi^\ast_{F\circ G,z^\ast}(x^\ast)&=\inf_{y^\ast\in\Y^\ast_G\cap \mathcal{B}_{\Y^\ast}}h(y^\ast)\\
		&=\inf_{y^\ast\in\Y^\ast_G\cap \mathcal{B}_{\Y^\ast}}\inf_{\lambda\geq 0}\sup_{\substack{x\in\X,\\ y\in\Y}}\of{\ip{x^\ast,x}-\varphi_{F,z^\ast}(y)+\lambda\ip{y^\ast,y}-\lambda \varphi_{G,y^\ast}(x)}\\
		&=\inf_{y^\ast\in\Y^\ast_G\cap \mathcal{B}_{\Y^\ast}}\inf_{\lambda\geq 0}\sup_{\substack{x\in\X,\\ y\in\Y}}\of{\ip{x^\ast,x}-\varphi_{F,z^\ast}(y)+\ip{\lambda y^\ast,y}- \varphi_{G,\lambda y^\ast}(x)}\\
		&=\inf_{y^\ast\in\Y^\ast_G}\sup_{\substack{x\in\X,\\ y\in\Y}}\of{\ip{x^\ast,x}-\varphi_{F,z^\ast}(y)+\ip{y^\ast,y}-\varphi_{G,y^\ast}(x)}\\
		&=\inf_{y^\ast\in\Y^\ast_G}
		\of{\sup_{x\in\X}\of{\ip{x^\ast,x}-\varphi_{G,y^\ast}(x)}+\sup_{y\in\Y}\of{\ip{y^\ast,y}-\varphi_{F,z^\ast}(y)}}\\
		&=\inf_{y^\ast\in\Y^\ast_G}\of{\varphi^\ast_{G,y^\ast}(x)+\varphi^\ast_{F,z^\ast}(y)}.
	\end{align*}
	Finally, the formula for $-(F\circ G)^\ast$ follows immediately by Remark~\ref{rem:SVconjugate}(ii).
\end{proof}

\section{Conclusion}\label{sec:conc}

In this paper, we prove a formula for the conjugate of the composition of two set-valued functions taking values in a complete lattice. Combined with the set-valued biconjugation theorem, it yields a dual representation for the composition when the composition is guaranteed to be lattice-lower semicontinuous and proper. Due to the technical nature of the proof, we limit the scope of this paper to theoretical results. As a future direction, the consequences of this formula for set-valued convex risk measures can be studied.

\bibliographystyle{named}

\begin{thebibliography}{99}
	\small
	\bibitem[Ararat, Ayg\"{u}n(2021)]{mucahitthesis}
	Ararat \c{C}, Ayg\"{u}n M. Dual representations of quasiconvex compositions with applications to systemic risk. arXiv e-print 2108.12910, 2021.
	
	\bibitem[Ararat~et al.(2017)]{svdrm}
	Ararat \c{C}, Hamel AH, Rudloff B. Set-valued shortfall and divergence risk measures. International Journal of Theoretical and Applied Finance 2017; 20 (5): 1750026 (48 pages).
	
	\bibitem[Ararat, Rudloff(2020)]{systriskduality}
	Ararat \c{C}, Rudloff B. Dual representations for systemic risk measures. Mathematics and Financial Economics 2020; 14 (1): 139--174.
	
	\bibitem[Bo\c{t}~et al.(2009)]{bot}
	Bo\c{t} RI, Grad S-M, Wanka G. Generalized Moreau–Rockafellar results for composed convex functions. Optimization 2009; 58 (7): 917--933.
	
	\bibitem[Cheng, Lin(1998)]{liurelated2}
	Cheng CZ, Lin BL. Nonlinear two functions minimax theorems. In: Ricceri B, Simons S (editors). Minimax Theory and Applications. Kluwer Academic Publishers, 1998, pp. 1--20.
	
	\bibitem[Drapeau~et al.(2015)]{completeduality}
	Drapeau S, Hamel AH, Kupper M. Complete duality for quasiconvex and convex set-valued functions. Set-Valued and Variational Analysis 2015; 24 (2): 253--275.
	
	\bibitem[Feinstein~et al.(2017)]{frw}
	Feinstein Z, Rudloff B, Weber S. Measures of systemic risk. SIAM Journal on Financial Mathematics 2017; 8 (1): 672--708.
	
	\bibitem[F\"{o}llmer, Schied(2016)]{fs:sf}
	F\"{o}llmer H, Schied A. Stochastic Finance: An Introduction in Discrete Time. 4th revised edition. Berlin, Germany: De Gruyter, 2016.
	
	\bibitem[Greco, Moschen(1998)]{liurelated1}
	Greco, GH, Moschen, MP. A minimax inequality for marginally semicontinuous functions. In: Ricceri B, Simons S (editors). Minimax Theory and Applications. Kluwer Academic Publishers, 1998, pp. 41--51.
	
	\bibitem[Hamel(2009)]{ham09}
	Hamel AH. A duality theory for set-valued functions I: Fenchel conjugation theory. Set-Valued and Variational Analysis 2009; 17 (2): 153--182.
	
	\bibitem[Hamel(2011)]{ham11}
	Hamel AH. A Fenchel–Rockafellar duality theorem for set-valued optimization. Optimization 2011; 60 (8-9): 1023--1043.
	
	\bibitem[Hamel~et al.(2010)]{hh:duality}
	Hamel AH, Heyde F. Duality for set-valued measures of risk. SIAM Journal on Financial Mathematics 2010; 1 (1): 66--95.
	
	\bibitem[Hamel~et al.(2015)]{setoptsurv}
	Hamel AH, Heyde F, L\"{o}hne A, Rudloff B, Schrage C. Set optimization - a rather short introduction. In: Hamel AH, Heyde F, L\"{o}hne A, Rudloff B, Schrage C (editors). Set Optimization and Applications - the state of the art. From set relations to set-valued risk measures. Berlin-Heidelberg, Germany: Springer-Verlag, 2015, pp. 65--141.
	
	\bibitem[Hamel~et al.(2011)]{conical}
	Hamel AH, Heyde F, Rudloff B. Set-valued risk measures for conical market models. Mathematics and Financial Economics 2011; 5 (1): 1--28.
	
	\bibitem[Liu(1978)]{liu}
	Liu, FC, A note on the von Neumann-Sion minimax principle. Bulletin of the Institute of Mathematics. Academia Sinica 1978; 6 (2, part 2): 517--523.
	
	\bibitem[Sion(1958)]{sion}
	Sion M. On general minimax theorems. Pacific Journal of Mathematics 1958; 8 (1): 171--176.
	
	\bibitem[Z\u{a}linescu(2002)]{zalinescu}
	Z\u{a}linescu C. Convex Analysis in General Vector Spaces. Singapore: World Scientific, 2002.
	
	%
	%
	%
	%
	%
	%
	%
	%
\end{thebibliography}

\end{document}